\newif\ifHAL
\newtheorem{theorem}{Theorem}[section]
\newtheorem{assumption}[theorem]{Assumption}
\newtheorem{proposition}[theorem]{Proposition}
\newtheorem{lemma}[theorem]{Lemma}
\newtheorem{remark}[theorem]{Remark}
\newtheorem{definition}[theorem]{Definition}
\newtheorem{remark}[theorem]{Remark}
\newtheorem{assumption}[theorem]{Assumption}
\numberwithin{equation}{section}
\begin{document}

\ifHAL

\title{$\Hrt$-reconstruction of piecewise polynomial fields with application to $hp$-a posteriori nonconforming error analysis for Maxwell's equations}

\author{Z. Dong\footnotemark[1], A. Ern\footnotemark[2]}

\footnotetext[1]{Inria, 48 rue Barrault, 75647 Paris, France and CERMICS, ENPC, Institut Polytechnique de Paris, CNRS, 6 \& 8 avenue B.~Pascal, 77455 Marne-la-Vall\'{e}e, France}

\footnotetext[2]{CERMICS, ENPC, Institut Polytechnique de Paris, CNRS, 6 \& 8 avenue B.~Pascal, 77455 Marne-la-Vall\'{e}e, France and Inria, 48 rue Barrault, 75647 Paris, France}

\date{\today}

\else

\title{$\Hrt$-reconstruction of piecewise polynomial fields with application to {$\lowercase{\boldsymbol{hp}}$}-a posteriori nonconforming error analysis for Maxwell's equations}

\author{
Zhaonan Dong\thanks{Inria, 48 rue Barrault, 75647 Paris, France and CERMICS, ENPC, Institut Polytechnique de Paris, CNRS, 6 \& 8 avenue B.~Pascal, 77455 Marne-la-Vall\'{e}e, France {\tt{zhaonan.dong@inria.fr}}.}
\and
Alexandre Ern\thanks{CERMICS, ENPC, Institut Polytechnique de Paris, CNRS, 6 \& 8 avenue B.~Pascal, 77455 Marne-la-Vall\'{e}e, France and Inria, 48 rue Barrault, 75647 Paris, France
{\tt{alexandre.ern@enpc.fr}}.}
}

\date{\today}

\fi

\maketitle

\begin{abstract}
We devise and analyze a novel $\Hrt$-reconstruction operator for piecewise polynomial fields on shape-regular simplicial meshes. The (non-polynomial) reconstruction is devised over the mesh vertex patches using the partition of unity induced by hat basis functions in combination with local Helmholtz decompositions. Our main focus is on homogeneous tangential boundary conditions. We prove that the difference between the reconstructed $\Hrtz$-field and the original, piecewise polynomial field, measured in the broken curl norm and in the $\bL^2$-norm, can be bounded in terms of suitable jump norms of the original field. The bounds are always $h$-optimal, and $p$-suboptimal by $\frac12$-order for the broken curl norm and by $\frac32$-order for the $\bL^2$-norm. An auxiliary result of independent interest is a novel broken-curl, divergence-preserving Poincar\'e inequality on vertex patches. Moreover, the $\bL^2$-norm estimate can be improved to $\frac12$-order suboptimality under a (reasonable) assumption on the uniform elliptic regularity pickup for a Poisson problem with Neumann conditions over the vertex patches.
We also discuss extensions of the $\Hrtz$-reconstruction operator to the prescription of mixed boundary conditions, to agglomerated polytopal meshes, and to convex domains.
Finally, we showcase an important application of the $\Hrt$-reconstruction operator to the $hp$-a posteriori nonconforming error analysis of Maxwell's equations. We focus on the (symmetric) interior penalty discontinuous Galerkin (dG) approximation of some simplified forms of Maxwell's equations.
\end{abstract}

\ifHAL

\textbf{Keywords.} $\Hrt$-reconstruction, $hp$-analysis, broken Poincar\'e inequality,
a posteriori error analysis, nonconforming approximation, discontinuous Galerkin,
Maxwell's equations

\medskip\noindent
\textbf{MSC.} 65N30, 65N15, 78M10

\else

\begin{keywords}
$\Hrt$-reconstruction, $hp$-analysis, broken Poincar\'e inequality,
a posteriori error analysis, nonconforming approximation, discontinuous Galerkin,
Maxwell's equations
\end{keywords}

\begin{AMS}
65N30, 65N15, 78M10
\end{AMS}

\markboth{Z. DONG, A. ERN}{$\Hrt$-reconstruction and $hp$-a posteriori nonconforming error analysis}

\fi

\section{Introduction}

Given a piecewise polynomial field on a shape-regular simplicial mesh covering
exactly a Lipschitz polyhedral domain, we show in
the paper the existence of a (non-polynomial) $\Hrt$-conforming field whose distance
to the original field in the broken curl norm and in the $\bL^2$-norm is bounded in terms
of suitable jumps (and boundary values) of the original piecewise polynomial field,
namely its tangential jumps (and boundary values) and the normal jumps (and boundary values)
of its curl. All the estimates are local and
$h$-optimal, and they are
$p$-suboptimal by only $\frac12$-order in the broken curl norm and by $\frac32$-order
in the $\bL^2$-norm. Here, $h$ denotes the mesh size and $p$ the polynomial degree.
We emphasize that, as an auxiliary result to establish the $\bL^2$-norm estimate,
we derive a novel broken-curl, divergence-preserving Poincar\'e inequality on
the mesh vertex patches that is of independent interest.
We also observe that the $\bL^2$-estimate can be improved to $\frac12$-order
$p$-suboptimality
under a (reasonable) assumption on the uniform elliptic regularity pickup for a Poisson
problem with Neumann conditions over the mesh vertex patches.

We present an explicit construction of the (non-polynomial) $\Hrt$-conforming field over the mesh vertex patches using the partition of unity induced by hat basis functions in combination with local Helmholtz decompositions. The crucial advantage of resorting to a local construction over the mesh vertex patches is that the associated subdomains have a trivial topology, thereby circumventing any dependence of the estimates on the topology of the  original domain. For simplicity, we devise the construction so that the (non-polynomial)  $\Hrt$-conforming field satisfies homogeneous tangential boundary conditions,  i.e., sits in $\Hrotz$, but we also  discuss the extension to more general mixed boundary conditions. Another relevant extension is the case of agglomerated polytopal meshes, where the same  $hp$-estimates as on simplicial meshes can be established by considering the underlying  simplicial submesh under the assumption that this latter mesh has locally the same size as the original polytopal mesh. Finally, we also discuss a variant where the  $\Hrt$-conforming field is defined globally without considering the mesh vertex patches. On convex domains, this leads to a $p$-optimal $\bL^2$-estimate  without any further assumption, but the estimate is only global and the constant depends on the topology of the global domain.

Reconstructing $\Hrt$-conforming fields from piecewise polynomial fields on a shape-regular simplicial mesh has already been quite explored in the literature. A first possibility is to devise a polynomial $\Hrt$-reconstruction using N\'ed\'elec (edge) finite elements defined by prescribing the degrees of freedom as the averages of those of the original, piecewise polynomial field. Such averaging operators have been considered in \cite{CamSo:16}, and a systematic analysis can be found in \cite{ErnGu:17_quasi} with and without boundary prescription, leading to (local) $h$-optimal estimates in the broken curl and $\bL^2$-norms (see also \cite{Brenn:93,Oswal:93} for the seminal work in the $H^1$-context). However, defining a reconstruction by prescription of degrees of freedom can hardly lead to $p$-optimal estimates because of the repeated use of discrete trace (and inverse) inequalities needed in the analysis (see \cite{BurEr:07} for further insight in the $H^1$-context). A second possibility is to solve local, discrete minimization problems using again N\'ed\'elec and Raviart--Thomas finite elements. Following the seminal work in \cite{BraSc:08,ErnVo:15,ErnVo:20} in the $H^1$/$\Hdv$-context, recent advances leading, in particular, to full $p$-optimality in the $\Hrt$-context were accomplished in \cite{ChaEV:21,ChaVo:23}. \cite{ChaEV:21} poses local problems on edge patches, but does not achieve $\Hrt$-conformity. Instead, \cite{ChaVo:23} poses local problems on vertex patches and achieves $\Hrt$-conformity. However, in both works, the original piecewise polynomial field must satisfy a certain orthogonality property, typically associated with Galerkin's orthogonality when solving a curl-curl problem. We also refer the reader to \cite{Chaum:23} for a simplified construction of \cite{ChaVo:23}, and to \cite{ChaVo:24} for the use of the technique from \cite{ChaVo:23} to devise a stable, local, commuting operator in $\Hrt$ with $hp$-optimal approximation properties. The above discussion shows that a gap remains in the literature when it comes to devising an $\Hrt$-conforming reconstruction from an arbitrary piecewise polynomial field and ensuring $hp$-approximation properties.

The main application we have in mind for the present $\Hrt$-reconstruction operator is the $hp$-a posteriori nonconforming error analysis of some simplified forms of Maxwell's equations involving the curl-curl operator and a zero-order term. In the case of elliptic problems, the idea of invoking an $H^1$-reconstruction operator based on a global Helmholtz decomposition to estimate the so-called nonconforming approximation error can be traced back to \cite{DarDurPraVam1996} in 2D and to \cite{CarBarJan2002} in 3D with Dirichlet boundary conditions (see also \cite{BecHanLar2003,Ainsworth:07} where the Helmholtz decomposition is also invoked). A further advance has been accomplished recently in \cite{DongErn:24} in the context of a discretization by the hybrid high-order (HHO) method introduced in \cite{DiPEL:14} for elliptic problems including mixed Dirichlet--Neumann boundary conditions. Specifically, the $H^1$-reconstruction operator considered in \cite{DongErn:24} is devised locally on mesh vertex patches which all have a simple topology.

We consider the (symmetric) interior penalty discontinuous Galerkin (dG) method for space discretization. This approximation technique is classical for elliptic problems (see, e.g., \cite{ArBCM:01,DiPEr:12} and the references therein) and has been developed and analyzed in the context of Maxwell's equations in \cite{PeScM:02,HoPSS:05}.
Here, we perform the $hp$-a posteriori error analysis hinging on the present $\Hrt$-reconstruction operator, focusing on homogeneous Dirichlet boundary conditions. This analysis offers, to our knowledge, two ground-breaking features: it is the first time that a full $hp$-analysis is achieved, and it is the first time that the analysis hinges on local (vertex patch) Helmholtz decompositions to estimate the nonconforming error. We emphasize that, while the theoretical derivation of the error estimate relies on the $\Hrtz$-reconstruction operator, this reconstruction does not need to be computed in practice.
Previous a posteriori error estimates bounded the nonconforming error using an averaging operator, as in \cite{houstonpersch04,HoPeS:07,chaumontfrelet:hal-04589791}, and were therefore only $h$-optimal. We notice, in passing, that \cite{chaumontfrelet:hal-04589791} derived an $hp$-a posteriori bound on the conforming error, and addressed the frequency-dependence of the a posteriori estimates in the indefinite regime by using a dualtiy argument \`a la Schatz (see \cite{Schatz1974}).

The paper is organized as follows. In Section~\ref{sec:setting}, we briefly present the continuous and discrete settings. In Section~\ref{sec:main results}, we state our main results on the $\Hrtz$-reconstruction operator and comment on various possible extensions. The main result of this section is Theorem~\ref{theorem: nonconforming error total}.
In Section~\ref{sec:preparatory}, we establish some preparatory results. In particular, we highlight the local broken-curl, divergence-preserving Poincar\'e inequality from Lemma~\ref{lemma: broken_Poincare inequality}.
In Section~\ref{sec:proofs}, we prove our main result. Finally, in Section~\ref{sec: a posteriori error for dG}, we consider the approximation of Maxwell's equations by the (symmetric) interior penalty dG method and develop an $hp$-version a posteriori error analysis based on the $\Hrtz$-reconstruction operator devised in Section~\ref{sec:main results}. The main result of this section is Theorem~\ref{th:reliability}.

\section{Setting}
\label{sec:setting}

In this section, we briefly describe the continuous and discrete settings.

\subsection{Continuous setting}

Let $\Dom$ be an open, bounded, connected, Lipschitz set (domain) in $\Real^d$ with boundary $\front$ and outward unit normal $\bn_\Dom$. We assume that $\Dom$ is a polyhedron so that it can be covered exactly by a simplicial mesh. To fix the ideas, we assume that $d=3$, but all what follows readily extends to $d=2$. We do not make any simplifying assumption on the topology of $\Dom$, i.e., $\Dom$ may be multiply connected and its boundary be composed of several connected components.

We use boldface font for vectors, vector fields, and functional spaces composed of such fields.
We use standard notation for Lebesgue and Sobolev spaces, and fractional-order Sobolev spaces are equipped with the standard Sobolev--Slobodeckii seminorm based on the double integral
(see, e.g., \cite[Def.~2.14]{EG_volI}). To alleviate the notation,
the inner product and associated norm in the spaces $\Ldeux$ and $\Ldeuxd$
are denoted by $(\SCAL,\SCAL)$ and $\|\SCAL\|$, respectively, whereas we add a subscript
$\omega$ when we consider the Lebesgue spaces over a subdomain $\omega \subset \Dom$.
We consider the Hilbert Sobolev spaces
\begin{subequations} \label{eq:Hrot_spaces} \begin{align}
\hrot & \eqq \{\bv \in \bL^2(\omega)  \st \ROT\bv\in \bL^2(\omega)\},\\
\hrotz &\eqq \{\bv \in \hrot \st \gamma\upc_{\partial\omega}(\bv)=\bzero\}, \\
\hdiv & \eqq \{\bv \in \bL^2(\omega)  \st \DIV\bv\in L^2(\omega)\},\\
\hdivz &\eqq \{\bv \in \hdiv \st \gamma\upd_{\partial\omega}(\bv)=0\},
\end{align} \end{subequations}
where $\gamma\upc_{\partial\omega}:\hrot\rightarrow \bH^{-\frac12}(\partial\omega)$
(resp., $\gamma\upd_{\partial\omega}:\hdiv\rightarrow H^{-\frac12}(\partial\omega)$) is
the extension by density of the tangential (resp., normal) trace operator such
that $\gamma\upc_{\partial\omega}(\bv)=\bv|_{\partial\omega}\CROSS \bn_\omega$ (resp.,
$\gamma\upd_{\partial\omega}(\bv)=\bv|_{\partial\omega}\SCAL \bn_\omega$) for smooth fields.
The subscript ${}_0$ is used for the nabla operator to indicate that the operator acts on
functions or fields respecting suitable homogeneous Dirichlet conditions. Thus,
$(\GRAD,-\DIVZ)$, $(\ROT,\ROTZ)$ and $(\DIV,-\GRADZ)$ are pairs of adjoint operators;
for instance, $(\ROTZ\bv,\bw)=(\bv,\ROT\bw)$ for all
$(\bv,\bw)\in \Hrotz\times\Hrot$.

\subsection{Mesh and polynomial spaces}

Let $\calT_h$ be an affine simplicial mesh covering $\Dom$ exactly. A generic mesh cell is denoted by $K$, its diameter by $h_K$ and its outward unit normal by $\bn_K$.
We write $\Fall$ for the set of mesh faces, $\Fint$
for the subset of mesh interfaces (shared by two distinct mesh
cells, $K_l$, $K_r$), and $\Fb$ for the subset of mesh boundary
faces (shared by one mesh cell, $K_l$, and the boundary, $\front$).
Every mesh interface $F\in\Fint$ is oriented
by the unit normal, $\bn_F$, pointing from $K_l$ to $K_r$
(the orientation is arbitrary, but fixed).
Every boundary face $F\in\Fb$ is oriented by the unit normal $\bn_F\eqq\bn_\Dom|_F$.
The diameter of a generic face $F\in\Fall$ is denoted by $h_F$.

The set of mesh vertices is denoted by $\vertice$ and is decomposed into the subset of interior vertices, $\verticei$, and the subset of boundary vertices, $\verticeb$. For all $\vtx\in \vertice$, $\meshv$ denotes the collection of the mesh cells which share $\vtx$ and $\omvtx$ the corresponding open subdomain (often called vertex patch).
The set of mesh faces belonging to $\overline{\omvtx}$, say $\bFa$, is partitioned as $\bFa=\Fv\cup \Fma$, where
$\Fv$ is the collection of the mesh faces which share $\vtx$ and $\Fma$ the collection of all the mesh faces lying on $\partial\omvtx$ and not containing $\vtx$. We notice that the two sets $\Fv$ and $\Fma$ are disjoint and that the set $\Fv$ contains mesh boundary faces if $\vtx$ lies on the boundary. Furthermore,
for all $\vtx\in\vertice$, we set
$h_\vtx:=\diam(\omvtx)$, and
let $\psi_{\vtx}$ be the hat basis function equal to $1$ at $\vtx$ and
having support in $\omvtx$. Recall that the hat basis functions satisfy
the following partition-of-unity property:
\begin{equation} \label{eq:PU}
\sum_{\vtx\in \vertice} \psi_\vtx = 1.
\end{equation}

For all $K\in \calT_h$, $\calFK$ (resp. $\verticeK$) is the collection of the mesh faces composing $\partial K$ (resp., mesh vertices in $\partial K$).
For all $F\in\calFK$ and every piecewise smooth field $\bv$ on $\mesh$, we define
the local trace operators such that
$\gamma\upg_{K,F}(\bv)(\bx) \eqq \bv|_K(\bx)$,
$\gamma\upc_{K,F}(\bv)(\bx)\eqq \bv|_{K}(\bx)\CROSS\bn_F$, $\gamma\upd_{K,F}(\bv)(\bx)\eqq \bv_h|_{K}(\bx)\cdot \bn_F$, for a.e.~$\bx\in F$.
Then, for all $F\in\Fint$ and $\mathrm{x}\in\{\mathrm{g},\mathrm{c}, \mathrm{d}\}$,
we define the jump and average operators such that
\begin{equation}
\jump{\bv}\upx_F\eqq \gamma\upx_{K_l,F}(\bv)-\gamma\upx_{K_r,F}(\bv), \quad
\avg{\bv}\upx_F\eqq \frac12\big(\gamma\upx_{K_l,F}(\bv)+\gamma\upx_{K_r,F}(\bv)\big).
\end{equation}
We also set $\jump{\bv}\upx_F \eqq \avg{\bv}\upx_F \eqq \gamma\upx_{K_l,F}(\bv)$
for all $F\in\Fb$. For all $K\in\mesh$, we define the jump across the boundary of $K$
as $\jumpK{\bullet}\upx|_F:=\iota_{K,F}\jump{\bullet}\upx_F$ for all $F\in\calFK$,
with $\iota_{K,F}:=\bn_K\SCAL\bn_F=\pm1$.

Let $p\ge1$ be the polynomial degree (see Remark~\ref{rem:p=0} below for the case $p=0$).
Let $\polP_{p,d}$ be the space
composed of $d$-variate polynomials of total degree at most $p$ and set
$\bpolP_{p,d}\eqq [\polP_{p,d}]^d$. We consider the following broken polynomial space:
\begin{equation}
\Ppb \eqq \bset \bv_h\in
\bL^2(\Dom) \st \bv_h|_K\in \bpolP_{p,d},\, \forall K\in\calT_h\eset, \label{eq:def_Vhb}
\end{equation}
and we denote the $\bL^2$-orthogonal projection onto $\Ppb$ as
\begin{equation}\label{def: L2 projection}
\bPi\upb_h : \Ldeuxd \to \Ppb.
\end{equation}
We use the notation $P_p\upb(\meshv)$ for the broken polynomial space based on the local
mesh $\meshv$ for a mesh vertex $\vtx\in\vertice$.

\subsection{$hp$-approximation tools}

Let $\kappa_{\calT_h}$ denote the shape-regularity parameter
of the mesh $\calT_h$, and $C(\kappa_{\calT_h})$ a generic constant
solely depending on $\kappa_{\calT_h}$ (but independent of the mesh size and the
polynomial degree), and whose value can change at each occurrence.
Sometimes, we also abbreviate as $A\lesssim B$ the inequality $A\le C(\kappa_{\calT_h})B$
for positive numbers $A$ and $B$.

\begin{lemma}[Discrete trace inequality]\label{lemma: Inverse inequality}
There exists $C(\kappa_{\mesh})$ so that,
for all $v\in \polP_{p,d}(K)$, all $K\in\mesh$, and all $p\ge1$,
\begin{equation}\label{eq: discrete_trace_inv}
\|v\|_{\dK} \leq C(\kappa_{\mesh}) \Big(\frac{p^2}{h_K} \Big)^{\frac12}\|v\|_{ K}.
\end{equation}
\end{lemma}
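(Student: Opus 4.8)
The strategy is the classical one: map to a reference element, establish a $p$-explicit polynomial trace inequality there, and scale back using shape-regularity. First I would fix the affine diffeomorphism $T_K:\widehat{K}\to K$ from the reference simplex $\widehat{K}$ onto $K$, and set $\widehat{v}:=v\circ T_K\in \polP_{p,d}(\widehat{K})$. A change of variables together with the shape-regularity of $\mesh$ yields the one-directional norm bounds $\|\widehat{v}\|_{\widehat{K}}^2 \lesssim h_K^{-d}\,\|v\|_{K}^2$ and $\|v\|_{\dK}^2 \lesssim h_K^{d-1}\,\|\widehat{v}\|_{\partial\widehat{K}}^2$, with hidden constants depending only on $\kappa_{\mesh}$; here one uses that the Jacobian of $T_K$ and the $(d-1)$-dimensional surface Jacobian on each face scale like $h_K^{d}$ and $h_K^{d-1}$, respectively, up to factors controlled by $\kappa_{\mesh}$.

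The heart of the matter is the reference-element estimate $\|\widehat{v}\|_{\partial\widehat{K}} \le C\,(p+1)\,\|\widehat{v}\|_{\widehat{K}}$ for all $\widehat{v}\in\polP_{p,d}(\widehat{K})$, with $C$ depending only on $d$. Since $\partial\widehat{K}$ is the union of $d+1$ faces, it suffices to bound the $L^2$-norm on a single face. The model computation is the one-dimensional endpoint evaluation: expanding $q\in\polP_p([-1,1])$ in the Legendre basis $q=\sum_{k=0}^p c_k L_k$ and using $L_k(1)=1$ together with $\|L_k\|_{L^2([-1,1])}^2=\tfrac{2}{2k+1}$, the Cauchy--Schwarz inequality gives $|q(1)|^2 \le \big(\sum_{k=0}^p (2k+1)\big)\big(\sum_{k=0}^p \tfrac{c_k^2}{2k+1}\big) = \tfrac{(p+1)^2}{2}\,\|q\|_{L^2([-1,1])}^2$, where we used $\sum_{k=0}^p(2k+1)=(p+1)^2$. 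To pass from this pointwise one-dimensional bound to the $L^2$-bound on a face of $\widehat{K}$, I would introduce collapsed (Duffy) coordinates flattening $\widehat{K}$ onto a Cartesian product and tensorize the Jacobi-polynomial analogue of the Legendre argument in the direction transverse to the face, integrating over the remaining face directions; this produces the sharp linear-in-$p$ dependence. Equivalently, this reference trace inequality is precisely the Warburton--Hesthaven polynomial trace inequality on the simplex, which may simply be cited.

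Combining the two steps gives $\|v\|_{\dK}^2 \lesssim h_K^{d-1}\,(p+1)^2\,\|\widehat{v}\|_{\widehat{K}}^2 \lesssim h_K^{d-1}\,(p+1)^2\, h_K^{-d}\,\|v\|_{K}^2 = (p+1)^2\, h_K^{-1}\,\|v\|_{K}^2$, and since $p\ge 1$ implies $p+1\le 2p$, taking square roots yields the claimed bound with a constant $C(\kappa_{\mesh})$.

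The step I expect to be the main obstacle is obtaining the sharp, linear-in-$p$ dependence in the reference face trace inequality: a crude argument (e.g.\ a discrete inverse inequality followed by a scaled trace theorem) would only deliver a $p^2$ factor and hence fail to match the target scaling. Getting the optimal power requires the orthogonal-polynomial (Legendre/Jacobi) structure, and on the simplex the bookkeeping of the collapsed coordinates and the attendant Jacobi weights is the delicate point; this is why invoking the established Warburton--Hesthaven estimate is the most economical route.
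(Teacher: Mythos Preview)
Your proposal is correct and matches the paper's approach: the paper's proof consists solely of citing \cite{warburton2003constants} (the Warburton--Hesthaven trace inequality), which is exactly the reference estimate you identify as the key ingredient. Your sketch goes beyond the paper by outlining the Legendre/Jacobi argument behind that cited result, but the route is the same.
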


\begin{proof}
A proof can be found in \cite{warburton2003constants} (with explicit constant in terms of $d$).
\end{proof}

\begin{lemma}[Local $L^2$-orthogonal projection]\label{lemma: L^2 projection}
Let $\delta \in(0,\frac12]$.
There exists $C(\kappa_{\mesh})$ so that,
for all $v\in H^{\frac12+\delta}(K)$, all $K\in\mesh$, and all $p\geq1$,
\begin{equation}\label{eq: L2 projection Polynomial approximation}
\|{v} - \Pi^p_{ K} (v)\|_{\partial K}
\leq C(\kappa_{\mesh}) \Big(\frac{h_K}{p}\Big)^{\delta} |v|_{H^{\frac12 + \delta}(K)},
\end{equation}
where $\Pi^p_K$ denotes the $L^2$-orthogonal projection onto $\polP_{p,d}$.
\end{lemma}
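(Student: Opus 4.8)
The plan is to transform the estimate to the reference simplex $\hat K$ by an affine map and then to establish a purely $p$-dependent bound there. Writing $s\eqq\frac12+\delta\in(\frac12,1]$ and using that the $L^2$-orthogonal projection commutes with affine pullback (the space $\polP_{p,d}$ is affine-invariant and the Jacobian of the map is constant on a simplex), the error $e\eqq v-\Pi^p_K(v)$ pulls back to $\hat e=\hat v-\Pi^p_{\hat K}(\hat v)$. Tracking the scaling of the three norms under the map, where the shape-regularity of $\mesh$ enters through $C(\kappa_{\mesh})$, namely $\|e\|_{\partial K}\simeq h_K^{(d-1)/2}\|\hat e\|_{\partial\hat K}$ and $|v|_{H^s(K)}\simeq h_K^{(d-2s)/2}|\hat v|_{H^s(\hat K)}$, one checks that \eqref{eq: L2 projection Polynomial approximation} is equivalent to the scale-free estimate $\|\hat e\|_{\partial\hat K}\lesssim p^{-\delta}\,|\hat v|_{H^s(\hat K)}$, since $(d-1)/2-(d-2s)/2=s-\frac12=\delta$.

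On the fixed domain $\hat K$ the key tool is the fractional multiplicative trace inequality: for $s\in(\frac12,1]$ and $w\in H^s(\hat K)$,
\begin{equation*}
\|w\|_{\partial\hat K}\leq C\,\|w\|_{\hat K}^{1-\theta}\,\|w\|_{H^s(\hat K)}^{\theta},\qquad \theta\eqq\frac{1}{2s}.
\end{equation*}
This generalises the classical case $s=1$ ($\theta=\frac12$) and follows from interpolation theory; the exponent $\theta=\frac{1}{2s}$ is the only one compatible with a dilation of $\hat K$. I would apply this inequality to $w=\hat e$ and control the two factors separately. For the first factor, the $L^2$-optimality of the projection together with a standard $hp$ quasi-interpolation estimate gives $\|\hat e\|_{\hat K}=\inf_{q\in\polP_{p,d}}\|\hat v-q\|_{\hat K}\lesssim p^{-s}\,|\hat v|_{H^s(\hat K)}$. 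For the second factor, since $\Pi^p_{\hat K}$ reproduces constants I would write $\hat e=(\hat v-c)-\Pi^p_{\hat K}(\hat v-c)$ for an arbitrary constant $c$, invoke the uniform-in-$p$ $H^s$-stability of the $L^2$-projection to get $\|\hat e\|_{H^s(\hat K)}\lesssim\|\hat v-c\|_{H^s(\hat K)}$, and finally take $c$ equal to the mean of $\hat v$ and use the fractional Poincar\'e inequality $\|\hat v-c\|_{\hat K}\lesssim|\hat v|_{H^s(\hat K)}$ to reduce the full norm to the seminorm, yielding $\|\hat e\|_{H^s(\hat K)}\lesssim|\hat v|_{H^s(\hat K)}$.

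Combining the two bounds in the trace inequality gives $\|\hat e\|_{\partial\hat K}\lesssim(p^{-s})^{1-\theta}|\hat v|_{H^s(\hat K)}=p^{-s(1-\frac{1}{2s})}|\hat v|_{H^s(\hat K)}=p^{-(s-\frac12)}|\hat v|_{H^s(\hat K)}=p^{-\delta}|\hat v|_{H^s(\hat K)}$, which is the required scale-free estimate; undoing the affine scaling then produces the factor $(h_K/p)^\delta$. I expect the main obstacle to be the two functional-analytic inputs on $\hat K$ that carry the entire $p$-rate: the fractional multiplicative trace inequality and, above all, the uniform-in-$p$ $H^s$-stability of the $L^2$-orthogonal projection onto $\polP_{p,d}$ on a simplex. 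The latter reduces, by interpolation with the trivial $L^2$-contraction property, to the $H^1$-stability of this projection on a simplex, which is the genuinely delicate (but known) ingredient; it is precisely what rules out the cruder route through the discrete trace inequality of Lemma~\ref{lemma: Inverse inequality}, which would lose a factor $p^{\frac12}$. Alternatively, the scale-free estimate on $\hat K$ can be quoted directly from the literature on the trace of the polynomial $L^2$-projection on simplices.
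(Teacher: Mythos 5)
Your reduction to the reference element is correct, and the overall architecture (endpoint multiplicative trace inequality with $\theta=\frac{1}{2s}$, combined with $L^2$-best approximation of order $p^{-s}$ and a stability bound for $\hat e$ in $H^s(\hat K)$) would indeed deliver the rate $p^{-\delta}$ \emph{if} the $L^2$-orthogonal projection onto $\polP_{p,d}$ were $H^s$-stable uniformly in $p$. But that is precisely where the argument breaks down: the uniform $H^1$-stability you invoke as the ``known'' ingredient is false. Already in one dimension, take $u:=\frac{1}{2N+1}(L_{N+1}-L_{N-1})$ with $L_n$ the Legendre polynomials, so that $u'=L_N$ by the identity $(2N+1)L_N=L_{N+1}'-L_{N-1}'$. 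Then $\Pi_N u=-\frac{1}{2N+1}L_{N-1}$, whence
\begin{equation*}
\frac{\|(\Pi_N u)'\|_{L^2(-1,1)}^2}{\|u'\|_{L^2(-1,1)}^2}
=\frac{(N-1)N}{(2N+1)^2}\cdot\frac{2N+1}{2}\sim \frac{N}{4},
\end{equation*}
so the $H^1$-stability constant grows at least like $p^{\frac12}$, and tracking the same example in $H^s$ indicates a growth like $p^{\frac{s}{2}}$. Feeding $C_{\mathrm{stab}}(p)\sim p^{\frac{s}{2}}$ into your final combination yields $\|\hat e\|_{\partial\hat K}\lesssim p^{-(s-\frac12)}\,p^{\frac{s}{2}\cdot\frac{1}{2s}}|\hat v|_{H^s(\hat K)}=p^{-\delta+\frac14}|\hat v|_{H^s(\hat K)}$, which loses a factor $p^{\frac14}$ and does not even decay when $\delta\le\frac14$.

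This is not a repairable detail within your framework: the failure of uniform $H^1$-stability of $\Pi^p_K$ is exactly the obstruction that makes the lemma nontrivial, and it is why the paper does not prove it but cites Melenk--Wurzer, whose argument is of a completely different nature (explicit expansions in orthogonal polynomials on the simplex and fine bounds on their boundary traces, establishing directly the $p$-uniform $L^2(\partial\hat K)$-bound for the projection on $B^{1/2}_{2,1}(\hat K)\supset H^{\frac12+\delta}(\hat K)$). Your closing remark that one could ``quote the scale-free estimate on $\hat K$ directly from the literature'' is the correct move --- but that literature result \emph{is} the lemma itself, not an auxiliary ingredient for a softer proof. A secondary, lesser point: the endpoint exponent $\theta=\frac{1}{2s}$ in the multiplicative trace inequality does not follow from plain interpolation of the trace theorem (which only gives $\theta>\frac{1}{2s}$); it requires the endpoint Besov trace embedding $B^{1/2}_{2,1}(\hat K)\hookrightarrow L^2(\partial\hat K)$ together with the $(\cdot,\cdot)_{\theta,1}$-interpolation inequality. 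That part can be fixed; the stability gap cannot.
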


\begin{proof}
A proof can be found in \cite[Corollary 1.2.]{Melenkurzer14}.
\end{proof}

\begin{lemma}[Local modified $hp$-Karkulik--Melenk operator]\label{lemma:local_hp-KM}
There exists $C(\kappa_{\mesh})$ such that, for all $\vtx \in \vertice$, there is
a local interpolation operator $\Igv: H^1(\omvtx) \to P_p\upb(\meshv) \cap H^1(\omvtx)$
such that, for all $v \in H^1(\omvtx)$, all $K \in \meshv$, and all $p\ge1$,
\begin{equation} \label{eq:inter_Ig on patch}
\Big(\frac{p}{h_K}\Big) \|v-\Igv(v)\|_K
+
\Big(\frac{p}{h_K}\Big)^{\frac12} \|v-\Igv(v)\|_{\partial K}
+
\|\nabla \Igv(v)\|_K
\leq
C(\kappa_{\mesh}) \|\nabla v\|_{\omvtx}.
\end{equation}
\end{lemma}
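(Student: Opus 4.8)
The plan is to obtain $\Igv$ as a patch-localized and suitably modified version of the standard $hp$-version quasi-interpolation (Cl\'ement-type) operator of Karkulik and Melenk, upgraded so that the right-hand side involves only the gradient seminorm $\|\nabla v\|_{\omvtx}$ rather than the full $H^1$-norm. Recall that this operator maps $H^1$ into the continuous piecewise polynomial space $P_p\upb(\meshv)\cap H^1(\omvtx)$, reproduces (at least) the constants, is $H^1$-stable, and satisfies, for each $K\in\meshv$ and $p\ge1$, cell-local approximation estimates in the properly scaled $H^1$-norm, namely
\begin{equation*}
\Big(\frac{p}{h_K}\Big)\|v-\mathcal I(v)\|_K
+\Big(\frac{p}{h_K}\Big)^{\frac12}\|v-\mathcal I(v)\|_{\partial K}
+\|\nabla\mathcal I(v)\|_K
\le C(\kappa_{\mesh})\Big(\frac{1}{h_K^2}\|v\|_{D_K}^2+|v|_{H^1(D_K)}^2\Big)^{\frac12},
\end{equation*}
where $D_K\subseteq\omvtx$ collects the cells of $\meshv$ sharing at least a vertex with $K$, and $|\cdot|_{H^1}$ denotes the gradient seminorm. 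The boundary term is part of the Karkulik--Melenk estimates; alternatively, it follows from the $L^2$- and gradient bounds via a scaled multiplicative trace inequality on $K$, with exponents consistent with the $hp$-analysis underlying Lemmas~\ref{lemma: Inverse inequality} and~\ref{lemma: L^2 projection}.

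Two geometric facts on the patch are used throughout. First, $\omvtx$ is connected and of simple (trivial) topology, so a Poincar\'e inequality holds: for $w\in H^1(\omvtx)$ with zero mean over $\omvtx$, one has $\|w\|_{\omvtx}\le C(\kappa_{\mesh})\,h_\vtx\,\|\nabla w\|_{\omvtx}$. Second, shape-regularity yields local quasi-uniformity on the patch, i.e.\ $h_\vtx\lesssim h_K$ and $h_K\lesssim h_\vtx$ for all $K\in\meshv$, so that the neighborhoods $D_K$ stay inside $\omvtx$ and all $h$-scalings may be expressed interchangeably in terms of $h_K$ or $h_\vtx$.

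To remove the $L^2$-contribution from the right-hand side, I would exploit constant reproduction. Setting $\bar v$ to be the mean of $v$ over $\omvtx$ and $w\eqq v-\bar v$, one has $\Igv(\bar v)=\bar v$, hence $v-\Igv(v)=w-\Igv(w)$ and $\nabla\Igv(v)=\nabla\Igv(w)$. Applying the local estimates above to $w$, the gradient part of the right-hand side is exactly $|w|_{H^1(D_K)}=|v|_{H^1(D_K)}\le\|\nabla v\|_{\omvtx}$, while the $L^2$-part is controlled by the patch Poincar\'e inequality together with local quasi-uniformity, $h_K^{-2}\|w\|_{D_K}^2\lesssim h_\vtx^{-2}\|w\|_{\omvtx}^2\lesssim\|\nabla v\|_{\omvtx}^2$. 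Combining the two bounds controls each of the three terms in \eqref{eq:inter_Ig on patch} by $C(\kappa_{\mesh})\|\nabla v\|_{\omvtx}$, which is the claim.

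The main obstacle I anticipate is not the Poincar\'e/mean-value reduction, which is routine, but rather guaranteeing that the base operator can be realized using \emph{only} the data of $v$ inside $\omvtx$, so that it is well defined on $H^1(\omvtx)$ without recourse to a global Sobolev extension (whose constant would otherwise have to be tracked on the patch). This is handled by carrying out the Karkulik--Melenk construction intrinsically on $\meshv$: since no boundary condition is imposed on $\Igv$, the nodal, edge, and face values defining the continuous piecewise polynomial may be assigned from local $L^2$-projections and averages taken over cells of $\meshv$ alone, and the simple topology of $\omvtx$ together with the comparability of $h_K$ and $h_\vtx$ ensures that the neighborhoods $D_K$ never leave the patch. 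The remaining care concerns the exponents of $p$ and $h_K$ in the scaled trace estimate, which are exactly those furnished by the underlying $hp$-approximation theory.
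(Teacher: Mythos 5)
Your proposal is correct and is essentially the argument behind the paper's proof, which consists only of citations: the base $hp$-quasi-interpolant with local $L^2$, trace, and $H^1$ estimates is taken from Melenk and Karkulik--Melenk, and the passage from the full scaled $H^1$-norm to the gradient seminorm via constant reproduction and a patch Poincar\'e inequality is exactly the modification credited to \cite[Corollary~2.5]{DongErn:24}. Your additional remarks on realizing the construction intrinsically on $\meshv$ and on recovering the boundary term from a multiplicative trace inequality are consistent with that route and fill in details the paper leaves to the references.
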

\begin{proof}
See \cite{Melenk:05,KarMe:15} (see also \cite[Corollary~2.5]{DongErn:24} for
using the $H^1$-seminorm on the right-hand side).
\end{proof}

\subsection{Jump lifting operator}

For every field $\bv_h\in \bP\upb_{p+1}(\calT_h)$, $\ROTh \bv_h$
denotes the broken curl of $\bv_h$ (evaluated cellwise). The reason we consider the
space $\bP\upb_{p+1}(\calT_h)$ is that, in what follows, we shall consider the product of a hat basis function times a field in $\Ppb$.
We define the jump lifting operator $\bLhpz: \bP\upb_{p+1}(\calT_h)\to \bP\upb_p(\calT_h)$
such that
\begin{equation} \label{eq:def_Lh}
(\bLhpz(\bv_h),\bphi_h) \eqq \sum_{F\in\Fall} (\jump{\bv_h}\upc_F,\avg{\bphi_h}\upg_F)_{\bL^2(F)},
\quad \forall \bphi_h\in \bP\upb_p(\calT_h).
\end{equation}

\begin{lemma}[Bound on lifting operator]
There exists $C(\kappa_{\mesh})$ so that,
for all $\bv_h\in \bP\upb_{p+1}(\calT_h)$, all $K\in\mesh$, and all $p\ge1$,
\begin{equation}\label{eq:bnd_on_L}
\|\bLhpz(\bv_h)\|_{K} \leq C(\kappa_{\mesh}) \bigg\{ \sum_{F\in\calFK}
\Big(\frac{p^2}{h_K}\Big) \|\jump{\bv_h}\upc_F\|_{F}^2\bigg\}^{\frac12}.
\end{equation}
\end{lemma}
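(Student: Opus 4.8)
The plan is to exploit the discontinuous (broken) nature of the test space $\bP\upb_p(\calT_h)$ in the definition~\eqref{eq:def_Lh} of $\bLhpz$ in order to localize the estimate to a single cell. Write $\bw_h\eqq\bLhpz(\bv_h)\in\bP\upb_p(\calT_h)$, fix a cell $K\in\mesh$, and take as test field $\bphi_h$ the element of $\bP\upb_p(\calT_h)$ equal to $\bw_h|_K$ on $K$ and to $\bzero$ on every other cell; this is admissible precisely because the space is broken. With this choice, the left-hand side of~\eqref{eq:def_Lh} collapses to $(\bw_h,\bphi_h)=\|\bw_h\|_K^2$, since $\bphi_h$ is supported in $K$.

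For the right-hand side, I note that $\avg{\bphi_h}\upg_F$ vanishes on every face not belonging to $\calFK$. On an interface $F\in\Fint\cap\calFK$ the neighbor of $K$ contributes zero, so $\avg{\bphi_h}\upg_F=\frac12\gamma\upg_{K,F}(\bw_h)$, whereas on a boundary face $F\in\Fb\cap\calFK$ one has $\avg{\bphi_h}\upg_F=\gamma\upg_{K,F}(\bw_h)$. In both cases the coefficient is at most $1$, so that, using the Cauchy--Schwarz inequality on each face,
\[
\|\bw_h\|_K^2 \le \sum_{F\in\calFK} \big|(\jump{\bv_h}\upc_F,\gamma\upg_{K,F}(\bw_h))_{\bL^2(F)}\big|
\le \sum_{F\in\calFK} \|\jump{\bv_h}\upc_F\|_{F}\,\|\bw_h|_K\|_{F}.
\]

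The final step bounds the face norms of $\bw_h|_K$ by a cell norm. Applying the discrete trace inequality~\eqref{eq: discrete_trace_inv} componentwise to the polynomial field $\bw_h|_K$, together with $\|\bw_h|_K\|_F\le\|\bw_h|_K\|_{\dK}$, gives $\|\bw_h|_K\|_{F}\le C(\kappa_{\mesh})(p^2/h_K)^{\frac12}\|\bw_h\|_K$. Substituting, dividing by $\|\bw_h\|_K$ (the bound being trivial if $\bw_h|_K=\bzero$), and invoking the discrete Cauchy--Schwarz inequality over the sum, whose cardinality $|\calFK|=d+1$ depends only on $d$, yields the asserted estimate~\eqref{eq:bnd_on_L}.

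The argument is essentially routine; the only point requiring care is that the discrete trace inequality of Lemma~\ref{lemma: Inverse inequality} is stated for scalars and over the whole boundary $\dK$, so one applies it to each Cartesian component of $\bw_h|_K$ and uses the trivial bound $\|\bw_h|_K\|_F\le\|\bw_h|_K\|_{\dK}$. I expect no genuine obstacle beyond ensuring that the localization by extension-by-zero is indeed admissible in the broken space, which is exactly what makes the cellwise estimate with the $p^2/h_K$ scaling possible.
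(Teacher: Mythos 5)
Your proof is correct and follows essentially the same route as the paper, whose one-line proof ("apply the Cauchy--Schwarz inequality and the discrete trace inequality~\eqref{eq: discrete_trace_inv}") is exactly the argument you spell out: localize the test field to $K$ by extension by zero in the broken space, bound the averages by the trace of $\bLhpz(\bv_h)|_K$, and conclude with the discrete trace inequality. The details you supply (the factor $\tfrac12$ versus $1$ on interior versus boundary faces, the componentwise application of the scalar trace inequality, and the discrete Cauchy--Schwarz over the $d+1$ faces) are all accurate.
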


\begin{proof}
Apply the Cauchy--Schwarz inequality and the discrete trace inequality~\eqref{eq: discrete_trace_inv}.
\end{proof}

\section{Main results on $\Hrt$-reconstruction} \label{sec:main results}

This section presents, and comments on extensions of,
our main result on $\Hrtz$-reconstruction.
Specifically, we construct an operator
\begin{equation}
\opRec : \bP\upb_p(\mesh) \rightarrow \Hrotz,
\end{equation}
such that, for all $\bE_h\in \bP\upb_p(\mesh)$, the error $\bE_h-\opRec(\bE_h)$ is bounded
in the broken curl norm and in the $\bL^2$-norm by suitable jumps of $\bE_h$ and $\ROTh\bE_h$
across the mesh interfaces and by suitable traces on the mesh boundary faces.

The devising of $\opRec(\bE_h)$ combines local (non-polynomial) constructions on the vertex patches associated with each mesh vertex $\vtx\in\vertice$. For all $\vtx\in\vertice$, we define the following
functional spaces:
\begin{subequations} \label{def: Vertex spaces}\begin{align}
\bXzv& := \Hrotzv\cap \Hdivvz, \label{eq:def_bXzv} \\
\bXdivzv& := \Hrotv\cap \Hdivzvz, \label{eq:def_bXdivzv}
\end{align} \end{subequations}
where $\Hdivvz$ (resp., $\Hdivzvz$) is the subspace of $\Hdivv$ (resp., $\Hdivzv$) composed
of divergence-free fields. We also set
\begin{subequations} \label{def: H1_patch}\begin{align}
X\upg_0(\omvtx) &:= H^1_0(\omvtx), \\
X\upg(\omvtx)&:=H^1(\omvtx)/\Real.
\end{align} \end{subequations}

\begin{definition}[Patchwise and global $\Hrtz$-reconstruction]\label{def: Patchwise and global potential reconstruction}
Let $\bE_h\in \bP\upb_p(\mesh)$.
For all $\vtx\in \vertice$, let $\bU_{\vtx}\in \bXzv$ solve the following well-posed problem:
\begin{equation}\label{def: Patch reconstruction}
(\ROTZ \bU_{\vtx}, \ROTZ  \bW_{\vtx})_{\omvtx}= (\bROT  (\psi_{\vtx} \bE_h ) + \bLhpz(\psi_{\vtx} \bE_h), \ROTZ   \bW_{\vtx})_{\omvtx} \quad \forall  \bW_{\vtx} \in  \bXzv,
\end{equation}
and let $\theta_{\vtx}\in X\upg_0(\omvtx)$ solve the following well-posed problem:
\begin{equation}\label{def: Patch reconstruction curl free}
( \nabla \theta_{\vtx}, \nabla v_{\vtx}  )_{\omvtx}= (  \psi_{\vtx} \bE_h  , \nabla v_{\vtx})_{\omvtx}
\quad \forall  v_{\vtx} \in  X\upg_0(\omvtx).
\end{equation}
Set
\begin{equation}\label{def: vertex curl reconstruction}
\bE_{\vtx}:  = \bU_{\vtx}  + \nabla \theta_{\vtx},
\end{equation}
and extending $\bE_{\vtx}$ by zero to $\Dom$,  set
\begin{equation}\label{def: patch reconstruction on the whole domain}
\opRec(\bE_h): = \sum_{\vtx\in \vertice} \bE_{\vtx}.
\end{equation}
\end{definition}

\begin{remark}[Local problems]
As the vertex patch $\omvtx$ is a simply connected Lipschitz domain with connected boundary,
the local problems \eqref{def: Patch reconstruction} and
\eqref{def: Patch reconstruction curl free} are well-posed. Moreover,
an equivalent definition is
\begin{align*}
\bU_{\vtx}&:= \arg \min_{\brho_{\vtx}\in \bXzv}\|  \ROTZ \brho_{\vtx} - \{\bROT(\psi_{\vtx} \bE_h) + \bLhpz(\psi_{\vtx} \bE_h)\}\|_{\omvtx}, \\
\theta_{\vtx}&: = \arg \min_{\xi_{\vtx}   \in X\upg_0(\omvtx) }\|  \nabla  \xi_{\vtx}  - \psi_{\vtx} \bE_h \|_{\omvtx}.
\end{align*}
\end{remark}

A simple but crucial observation is that $\bE_{\vtx}\in \Hrotzv$ so that
\begin{equation}
\opRec(\bE_h) \in \Hrotz.
\end{equation}
We now set
\begin{equation} \label{eq:encvtx}
\bdelta_{\vtx}:= \bE_{\vtx} - \psi_{\vtx} \bE_h \quad \forall \vtx\in\vertice,
\end{equation}
and estimate $\bdelta_{\vtx}$ in the broken curl norm and in the $\bL^2$-norm.

\begin{lemma}[Local bound in broken curl norm]\label{lem:broken curl}
There exists $C(\kappa_{\mesh})$ so that, for all $\bE_h\in \bP\upb_p(\mesh)$ and all $\vtx \in \vertice$,
\begin{align}
\|\bROT \bdelta_{\vtx}\|_{\omvtx} \le {}&
C(\kappa_{\mesh}) \bigg\{
 \! \sum_{F\in \Fv   } \!
 \Big(\frac{h_F}{p}\Big) \ltwo{\jump{ \bROT \bE_h}\upd_F }{F}^2
+ \Big(\frac{p^2}{h_F} \Big) \ltwo{ \jump{\bE_h}\upc_F}{F}^2
\bigg\}^{\frac12}.
\label{eq:broken curl}
\end{align}
\end{lemma}

\begin{proof}
The proof can be found in Section \ref{sec:proof_broken curl}.
\end{proof}

\begin{lemma}[Local $\bL^2$-bound using broken-curl, divergence-preserving Poincaré inequality]\label{lem:L2 Poincare}
There exists $C(\kappa_{\mesh})$ so that, for all $\bE_h\in \bP\upb_p(\mesh)$ and all $\vtx \in \vertice$,
\begin{align}
\| \bdelta_{\vtx} \|_{\omvtx} \le {}&  C(\kappa_{\mesh}) p \bigg\{
 \! \sum_{F\in \Fv   } \!
\Big( \frac{h_F}{p}\Big)^3 \ltwo{\jump{\ROTh \bE_h}\upd_F }{F}^2
+ h_F \ltwo{ \jump{\bE_h}\upc_F}{F}^2
\bigg\}^{\frac12}.
\label{eq:L2 Poincare}
\end{align}
\end{lemma}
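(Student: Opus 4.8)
The plan is to exploit the key structural fact that $\bdelta_\vtx$ is divergence-free, and then to apply the broken-curl, divergence-preserving Poincaré inequality. First I would observe that $\bU_\vtx\in\bXzv\subset\Hdivvz$ by \eqref{eq:def_bXzv}, so $\DIV\bU_\vtx=0$. Testing the local problem~\eqref{def: Patch reconstruction curl free} against every $v_\vtx\in X\upg_0(\omvtx)=H^1_0(\omvtx)$ shows that $(\nabla\theta_\vtx-\psi_\vtx\bE_h,\nabla v_\vtx)_{\omvtx}=0$, i.e., the field $\nabla\theta_\vtx-\psi_\vtx\bE_h$ has vanishing weak divergence on $\omvtx$. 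Combining this with~\eqref{def: vertex curl reconstruction} and the definition~\eqref{eq:encvtx} of $\bdelta_\vtx$, I conclude that $\bdelta_\vtx=\bU_\vtx+(\nabla\theta_\vtx-\psi_\vtx\bE_h)\in\Hdivvz$ with $\DIV\bdelta_\vtx=0$. This divergence-preserving property is what will keep the Poincaré constant proportional to $h_\vtx$.

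Next I would invoke Lemma~\ref{lemma: broken_Poincare inequality} for the divergence-free field $\bdelta_\vtx$, which I expect to yield a bound of the form
\begin{equation*}
\|\bdelta_\vtx\|_{\omvtx}\lesssim h_\vtx\,\|\ROTh\bdelta_\vtx\|_{\omvtx}+h_\vtx^{\frac12}\Big\{\sum_{F\in\Fv}\ltwo{\jump{\bdelta_\vtx}\upc_F}{F}^2\Big\}^{\frac12}.
\end{equation*}
Only the faces in $\Fv$ contribute, because on the remaining faces of $\partial\omvtx$, namely those in $\Fv^\partial$, the tangential trace of $\bdelta_\vtx$ vanishes (there $\psi_\vtx=0$ and $\bE_\vtx\in\Hrotzv$).

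It then remains to control the two contributions. The broken curl $\|\ROTh\bdelta_\vtx\|_{\omvtx}$ is bounded directly by Lemma~\ref{lem:broken curl}. For the tangential jumps, since $\bE_\vtx$ is $\Hrt$-conforming and $\psi_\vtx$ is globally continuous with $0\le\psi_\vtx\le1$, one has $\jump{\bdelta_\vtx}\upc_F=-\psi_\vtx\jump{\bE_h}\upc_F$ on each $F\in\Fv$ (the boundary faces through $\vtx$ being handled via the trace convention), so that $\ltwo{\jump{\bdelta_\vtx}\upc_F}{F}\le\ltwo{\jump{\bE_h}\upc_F}{F}$. Inserting these bounds and using shape-regularity ($h_\vtx\simeq h_F$ for all $F\in\Fv$), the first term is bounded by $h_\vtx$ times the right-hand side of~\eqref{eq:broken curl}, which a direct computation shows to equal the right-hand side of~\eqref{eq:L2 Poincare}; the leftover jump contribution $h_\vtx\sum_{F\in\Fv}\ltwo{\jump{\bE_h}\upc_F}{F}^2\simeq\sum_{F\in\Fv}h_F\ltwo{\jump{\bE_h}\upc_F}{F}^2$ is absorbed into the term $\sum_{F\in\Fv}p^2h_F\ltwo{\jump{\bE_h}\upc_F}{F}^2$ already present in~\eqref{eq:L2 Poincare}, since $p\ge1$.

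The only genuinely delicate ingredient is Lemma~\ref{lemma: broken_Poincare inequality} itself: its proof must use the vanishing divergence of $\bdelta_\vtx$ in an essential way to prevent any loss in the $h_\vtx$-scaling of the Poincaré constant and to keep the jump seminorm from picking up additional $p$-factors. Once that inequality and the broken-curl estimate of Lemma~\ref{lem:broken curl} are in hand, the remaining steps above are entirely routine.
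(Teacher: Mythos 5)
Your proposal is correct and follows essentially the same route as the paper: establish that $\bdelta_{\vtx}$ is divergence-free, invoke the broken-curl, divergence-preserving Poincar\'e inequality of Lemma~\ref{lemma: broken_Poincare inequality}, bound the broken-curl term via Lemma~\ref{lem:broken curl} and the tangential jumps by $\ltwo{\jump{\bE_h}\upc_F}{F}$, and balance the powers of $p$. The only imprecision is that Lemma~\ref{lemma: broken_Poincare inequality} should be applied to $\bv:=\psi_{\vtx}\bE_h\in \bH^1(\meshv)$ with the admissible choice $\bv\upc_{\vtx}:=\bE_{\vtx}$ (so that $\bv\upc_{\vtx}-\bv=\bdelta_{\vtx}$), rather than directly to $\bdelta_{\vtx}$, which need not belong to $\bH^1(\meshv)$ and for which the lemma as stated only guarantees the existence of \emph{some} conforming correction; your displayed inequality is then exactly the one the paper uses.
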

\begin{proof}
The proof can be found in Section \ref{sec:proof_L2 Poincare}.
\end{proof}

The $\bL^2$-estimate from Lemma~\ref{lem:L2 Poincare} can be improved by invoking the
elliptic regularity pickup of the Poisson problem with homogeneous Neumann boundary
conditions on the vertex patches. The improvement consists in removing the broken curl contribution from~\eqref{eq:L2 Poincare} and also the global scaling factor $p$ in front of the opening brace. We need, however, a uniform estimate of the underlying
constants over all the vertex patches in the mesh. It is natural to expect that these
constants can be bounded in terms of the mesh shape-regularity. As we did not find a precise
result in the literature supporting this claim, we formulate it as an assumption. We hope
that this will stimulate further research in this direction to prove (or disprove) the
assumption.

\begin{assumption}[Elliptic regularity pickup for Neumann problem] \label{ass:Neumann}
There exist $C(\kappa_{\mesh})$ and $\delta:=\delta(\kappa_{\mesh})>0$ so that, for all $\vtx\in\vertice$ and all $\bg\in \bX\upg(\omvtx):=[X\upg(\omvtx)]^d$, the unique solution $\zeta_{\bg} \in X\upg(\omvtx)$ of the Neumann problem $(\nabla \zeta_{\bg},\nabla \rho)_{\omvtx} = (\bg,\nabla \rho)_{\omvtx}$ for all $\rho \in X\upg(\omvtx)$ (i.e., $\Delta \zeta_{\bg} = \DIV\bg$ in $\omvtx$ and $\bn_{\omvtx } {\cdot}  \nabla \zeta_{\bg}= \bn_{\omvtx} {\cdot} \bg$ on $\partial \omvtx$) satisfies the estimate
\begin{equation} \label{eq:pickup}
\|\zeta_{\bg}\|_{\omvtx} + h_\vtx^{\frac32+\delta} |\nabla \zeta_\bg|_{H^{\frac12+\delta}(\omvtx)}
\le C(\kappa_{\mesh}) h_\vtx^2 \|\nabla \bg\|_{\omvtx}.
\end{equation}
\end{assumption}

\begin{lemma}[Local $\bL^2$-bound using Sobolev embedding on vertex patches]\label{lem:L2 Sobolev}
Under Assumption~\ref{ass:Neumann}, there exists $C(\kappa_{\mesh})$ so that, for all $\bE_h\in \bP\upb_p(\mesh)$ and all $\vtx \in \vertice$,
\begin{align}
\| \bdelta_{\vtx} \|_{\omvtx} \le {}& C(\kappa_{\mesh})\bigg\{
\! \sum_{F\in \Fv } \! h_F \ltwo{ \jump{\bE_h}\upc_F}{F}^2
\bigg\}^{\frac12}.
\label{eq:L2 Sobolev}
\end{align}
\end{lemma}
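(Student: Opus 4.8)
The plan is to improve the $\bL^2$-estimate of Lemma~\ref{lem:L2 Poincare} by extracting a gradient part from $\bdelta_\vtx$ and controlling it via the elliptic regularity pickup of Assumption~\ref{ass:Neumann}, which removes both the extra factor of $p$ and the contribution of the curl jumps. Recall that $\bdelta_\vtx = \bU_\vtx + \nabla\theta_\vtx - \psi_\vtx\bE_h$, where $\bU_\vtx\in\bXzv$ and $\theta_\vtx\in X\upg_0(\omvtx)$ are defined by \eqref{def: Patch reconstruction}--\eqref{def: Patch reconstruction curl free}. The starting point is a Helmholtz-type decomposition of $\bdelta_\vtx$ on the (simply connected, connected-boundary) patch $\omvtx$: write $\bdelta_\vtx = \bz_\vtx + \nabla\zeta_\vtx$ with $\bz_\vtx$ divergence-free and $\zeta_\vtx\in X\upg(\omvtx)$ solving the Neumann problem $(\nabla\zeta_\vtx,\nabla\rho)_{\omvtx}=(\bdelta_\vtx,\nabla\rho)_{\omvtx}$ for all $\rho\in X\upg(\omvtx)$, so that the two parts are $\bL^2$-orthogonal and $\|\bdelta_\vtx\|_{\omvtx}^2 = \|\bz_\vtx\|_{\omvtx}^2 + \|\nabla\zeta_\vtx\|_{\omvtx}^2$. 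The point is that $\bz_\vtx$ is a divergence-free field sharing the same curl as $\bdelta_\vtx$, so by the broken-curl divergence-preserving Poincaré inequality (Lemma~\ref{lemma: broken_Poincare inequality}) it is already controlled, $h$-optimally and $p$-optimally, by $h_\vtx\|\bROT\bdelta_\vtx\|_{\omvtx}$, which by Lemma~\ref{lem:broken curl} is bounded by the desired right-hand side together with the curl-jump terms. The crux is then to show that these curl-jump terms, which appear with a factor $(h_F/p)^{3/2}$, are in fact absorbed into the tangential-jump terms, or at least dominated, so they do not survive in \eqref{eq:L2 Sobolev}.

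The gradient part $\nabla\zeta_\vtx$ is where Assumption~\ref{ass:Neumann} enters. Since $\bdelta_\vtx$ has no inter-cell tangential jumps after reconstruction only on the conforming component, the Neumann data for $\zeta_\vtx$ should be expressible through the tangential jumps alone; concretely, I expect to identify $\bg := \psi_\vtx\bE_h - \bE_\vtx^{\mathrm{rot}}$ (or a closely related piecewise-polynomial field whose gradient captures $\bdelta_\vtx$) and apply \eqref{eq:pickup} with this $\bg$. The estimate \eqref{eq:pickup} gives $\|\zeta_\bg\|_{\omvtx}\lesssim h_\vtx^2\|\nabla\bg\|_{\omvtx}$, but what I actually need is a bound on $\|\nabla\zeta_\vtx\|_{\omvtx}$ itself; this will come from a duality/Aubin--Nitsche argument pairing $\nabla\zeta_\vtx$ against a smooth test gradient, using the $H^{\frac12+\delta}$-regularity of $\nabla\zeta_\bg$ to gain a factor $(h_F/p)^{\delta}$ on each face via the local $L^2$-projection bound of Lemma~\ref{lemma: L^2 projection}. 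The interior cancellation of the polynomial data (since $\bdelta_\vtx$ restricted to each cell is tested against a gradient and integrated by parts) should reduce everything to face integrals of $\jump{\bE_h}\upc_F$, weighted precisely by $h_F^{1/2}$, matching \eqref{eq:L2 Sobolev}.

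The main obstacle I anticipate is the duality step for $\|\nabla\zeta_\vtx\|_{\omvtx}$: obtaining the clean $h_F^{1/2}\|\jump{\bE_h}\upc_F\|_F$ weight without residual $p$-powers requires carefully balancing the regularity gain $\delta$ from \eqref{eq:pickup} against the inverse/trace factors from Lemmas~\ref{lemma: Inverse inequality} and~\ref{lemma: L^2 projection}. One must integrate by parts the pairing $(\nabla\zeta_\vtx,\bdelta_\vtx)_{\omvtx}$ cellwise, move $\zeta_\vtx$'s trace onto the faces, insert its $L^2$-projection $\Pi^p_K(\zeta_\vtx|_{\partial K})$ as an intermediary, and use that $\bdelta_\vtx$ carries no tangential jump of its own conforming part—only the tangential jump of $\psi_\vtx\bE_h$ remains—so the face terms reduce to $\sum_{F\in\Fv}(\jump{\bE_h}\upc_F,\,\zeta_\vtx - \Pi^p\zeta_\vtx)_F$. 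Applying Cauchy--Schwarz, the approximation estimate \eqref{eq: L2 projection Polynomial approximation} with exponent $\delta$, and then \eqref{eq:pickup} to bound $|\nabla\zeta_\vtx|_{H^{\frac12+\delta}}$ should yield the bound; I expect the powers of $h_F$ and $p$ to conspire to leave exactly the $p$-independent, $h$-optimal weight $h_F^{1/2}$ claimed in \eqref{eq:L2 Sobolev}, provided the $\delta$-gain exactly cancels the $(h_F/p)^{-\delta}$ arising from the regularity scaling. Verifying that this cancellation is exact—and that the curl-jump contribution from the $\bz_\vtx$ part is genuinely subsumed rather than merely comparable—is the delicate bookkeeping I would need to get right.
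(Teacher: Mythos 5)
Your proposal diverges from the paper's proof and contains a genuine gap in its first half. You split $\bdelta_{\vtx}$ into an $\bL^2$-orthogonal sum of a divergence-free field and a gradient $\nabla\zeta_{\vtx}$, and you propose to bound the divergence-free part by $h_{\vtx}\ltwo{\bROT\bdelta_{\vtx}}{\omvtx}$ via the broken-curl, divergence-preserving Poincar\'e inequality followed by Lemma~\ref{lem:broken curl}. This step cannot deliver \eqref{eq:L2 Sobolev}: combining the factor $h_{\vtx}$ with the weight $p^2/h_F$ in \eqref{eq:broken curl} leaves a factor $p$ in front of $h_F^{1/2}\ltwo{\jump{\bE_h}\upc_F}{F}$, and the curl-jump terms survive with weight $h_{\vtx}^2(h_F/p)\ltwo{\jump{\bROT\bE_h}\upd_F}{F}^2$; they are not absorbed by the tangential-jump terms (converting $\jump{\bROT\bE_h}\upd_F$ into $\jump{\bE_h}\upc_F$ by an inverse inequality costs $p^4/h_F^2$ and leaves a weight $p^3 h_F$). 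This is exactly the computation that produces the weaker bound of Lemma~\ref{lem:L2 Poincare}; you name the absorption of these terms as ``the crux'' but provide no mechanism, and none exists along this route. As the paper's remark on $p$-suboptimality states, the improvement in \eqref{eq:L2 Sobolev} comes precisely from \emph{circumventing} the broken curl estimate.

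The paper's proof instead treats all of $\bdelta_{\vtx}$ at once by an Aubin--Nitsche duality argument for the curl-curl operator: since $\bdelta_{\vtx}$ is already divergence-free (because $\bU_{\vtx}$ is divergence-free by construction and $\nabla\theta_{\vtx}-\psi_{\vtx}\bE_h$ is divergence-free by \eqref{def: Patch reconstruction curl free}), no preliminary Helmholtz split is needed, and one solves $\ROT(\ROTZ\bZ_{\vtx})=\bdelta_{\vtx}$ with $\bZ_{\vtx}\in\bXzv$. The decisive cancellation is that testing the defining relation \eqref{def: Patch reconstruction} with $\bW_{\vtx}:=\bZ_{\vtx}$ eliminates every volume term, leaving only the face sum $\sum_{F\in\Fv}(\avg{\ROTZ\bZ_{\vtx}-\bPi\upb_{\vtx}(\ROTZ\bZ_{\vtx})}\upg_F,\psi_{\vtx}\jump{\bE_h}\upc_F)_F$, in which the curl jumps of $\bE_h$ never appear. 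Assumption~\ref{ass:Neumann} enters only through Lemma~\ref{lemma: regularity}, which gives $|\ROTZ\bZ_{\vtx}|_{\bH^{\frac12+\delta}(K)}\lesssim h_{\vtx}^{\frac12-\delta}\ltwo{\bdelta_{\vtx}}{\omvtx}$, and Lemma~\ref{lemma: L^2 projection} converts this into the weight $(h_F/p)^{\delta}$ on each face, yielding the $p$-uniform, $h$-optimal bound. Your duality idea for the gradient part is in the right spirit, but it must be applied to the whole of $\bdelta_{\vtx}$ through the curl-curl dual problem on the patch, not to a Neumann problem for a gradient remainder.
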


\begin{proof}
The proof can be found in Section \ref{sec:proof_L2 Sobolev}.
\end{proof}

We are now ready to state global estimates on $\opRec(\bE_h)-\bE_h$.

\begin{theorem}[Bound on $\opRec(\bE_h)-\bE_h$]\label{theorem: nonconforming error total}
There exists $C(\kappa_{\mesh})$ so that, for all $\bE_h\in \bP\upb_p(\mesh)$,
\begin{subequations} \label{eq:main_bounds} \begin{align}
&\|\bROT (\opRec(\bE_h)-\bE_h) \|
\!\leq\! {} C(\kappa_{\mesh}) \bigg\{ \!\su\!
\Big(\!\frac{h_K}{p}\!\Big) \!\ltwo{\jumpK{ \bROT \bE_{h}}\upd }{\dK}^2
\!+\!
\Big( \frac{p^2}{h_K} \!\Big) \!\ltwo{ \jumpK{\bE_h}\upc}{\dK}^2
\! \!\bigg\}^{\frac12}  \!\!,
\label{eq: Global nonconforming error bound weighted broken curl} \\
&\| \opRec(\bE_h)-\bE_h \|
\leq C(\kappa_{\mesh}) p \bigg\{
 \! \su \!
\Big( \frac{h_K}{p}\Big)^3 \! \ltwo{\jumpK{\ROTh \bE_{h}}\upd}{\dK}^2
\!+
h_K \ltwo{ \jumpK{\bE_h}\upc}{\dK}^2 \!\bigg\}^{\frac12}\!\!.\label{eq:global L2 Poincare}
\end{align}
Moreover, under Assumption~\ref{ass:Neumann}, we have
\begin{equation}
\| \opRec(\bE_h)-\bE_h \|
\leq C(\kappa_{\mesh}) \bigg\{ \!\su\! h_K \ltwo{ \jumpK{\bE_h}\upc}{\dK}^2
 \bigg\}^{\frac12}.
\label{eq: Global nonconforming error bound weighted L2}
\end{equation} \end{subequations}
\end{theorem}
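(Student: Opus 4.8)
The plan is to obtain the three global estimates \eqref{eq:main_bounds} by assembling the local bounds of Lemmas~\ref{lem:broken curl}, \ref{lem:L2 Poincare}, and \ref{lem:L2 Sobolev} over the mesh vertex patches, exploiting the partition-of-unity property \eqref{eq:PU} together with the finite-overlap structure of the patches. The starting point is the identity $\opRec(\bE_h)-\bE_h = \sum_{\vtx\in\vertice}\bdelta_{\vtx}$, which follows from \eqref{def: patch reconstruction on the whole domain}, the definition \eqref{eq:encvtx} of $\bdelta_{\vtx}$, and the decomposition $\bE_h = \sum_{\vtx}\psi_\vtx\bE_h$ coming from \eqref{eq:PU}. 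I will use repeatedly that each $\bdelta_{\vtx}$ is supported in $\omvtx$, and that $K\subset\omvtx$ if and only if $\vtx\in\verticeK$, i.e.\ $K\in\meshv$.

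First I would reduce each global norm on the left-hand side of \eqref{eq:main_bounds} to a sum of the corresponding local patch norms. Splitting into cell contributions and keeping only the $d+1$ vertices of $K$ that contribute on $K$, the Cauchy--Schwarz inequality over these $d+1$ terms gives, for the broken curl,
\begin{equation*}
\|\bROT(\opRec(\bE_h)-\bE_h)\|^2 = \su \Big\|\sum_{\vtx\in\verticeK}\bROT\bdelta_{\vtx}\Big\|_K^2 \le (d+1)\,\su\sum_{\vtx\in\verticeK}\|\bROT\bdelta_{\vtx}\|_K^2,
\end{equation*}
and exchanging the order of summation via the equivalence $\vtx\in\verticeK \Leftrightarrow K\in\meshv$ turns the right-hand side into $(d+1)\sum_{\vtx\in\vertice}\|\bROT\bdelta_{\vtx}\|_{\omvtx}^2$. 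The very same computation applied to $\opRec(\bE_h)-\bE_h$ itself yields $\|\opRec(\bE_h)-\bE_h\|^2 \le (d+1)\sum_{\vtx\in\vertice}\|\bdelta_{\vtx}\|_{\omvtx}^2$.

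Next I would insert the local bounds: for \eqref{eq: Global nonconforming error bound weighted broken curl} I substitute \eqref{eq:broken curl}; for \eqref{eq:global L2 Poincare} I substitute \eqref{eq:L2 Poincare}, squaring so that the $p$-prefactor becomes a global factor $p^2$ in front of the whole sum (restored to $p$ after the final square root); and for \eqref{eq: Global nonconforming error bound weighted L2} I substitute \eqref{eq:L2 Sobolev} under Assumption~\ref{ass:Neumann}. In every case this produces a double sum $\sum_{\vtx\in\vertice}\sum_{F\in\Fv}$ of face terms weighted by powers of $h_F/p$, and it remains to recast it as the cell-face sum appearing on the right-hand side of the theorem. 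The combinatorial key is that a given mesh face $F$ belongs to $\Fv$ for exactly its $d$ vertices, so $\sum_{\vtx}\sum_{F\in\Fv}(\cdots)_F = d\sum_{F\in\Fall}(\cdots)_F \le d\,\su\sum_{F\in\calFK}(\cdots)_F$, the last inequality holding because each face occurs in $\calFK$ for one or two cells. Shape-regularity then gives $h_F\simeq h_K$ for $F\in\calFK$, so the weights $h_F/p$ may be replaced by $h_K/p$ up to a factor depending only on $\kappa_{\mesh}$; and since $\iota_{K,F}=\pm1$, one has $\sum_{F\in\calFK}\ltwo{\jump{\bullet}\upx_F}{F}^2 = \ltwo{\jumpK{\bullet}\upx}{\dK}^2$ for $\mathrm{x}\in\{\mathrm{c},\mathrm{d}\}$. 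Taking square roots delivers the three stated bounds.

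The heavy analytic content of the theorem is entirely carried by the local lemmas, so the remaining work here is essentially geometric bookkeeping. The main point requiring care is therefore the uniform control of the various multiplicities — the bounded overlap of the patches (each cell lies in exactly $d+1$ of them) and the bounded multiplicity of each face in the families $\Fv$ and $\calFK$ — together with the shape-regularity equivalences $h_F\simeq h_K\simeq h_\vtx$ that allow one to pass freely between face-, cell-, and patch-based weights. All these quantities are $O(1)$ in terms of $\kappa_{\mesh}$ and are absorbed into $C(\kappa_{\mesh})$; in particular the boundary faces, where the jump reduces to a single tangential trace, are treated by the same counting with no modification.
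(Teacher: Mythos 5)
Your proposal is correct and follows essentially the same route as the paper: the paper's proof is a two-line appeal to the identity $(\opRec(\bE_h)-\bE_h)|_K=\sum_{\vtx\in\verticeK}\bdelta_\vtx|_K$ (from the partition of unity), the three local lemmas, the triangle inequality, and shape-regularity, which is precisely the bookkeeping you carry out in detail. Your explicit accounting of the overlap multiplicities and the face-to-cell-boundary conversion is a faithful expansion of what the paper leaves implicit.
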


\begin{proof}
The bounds~\eqref{eq:main_bounds} readily follow from the local estimates
established in Lemmas~\ref{lem:broken curl}, \ref{lem:L2 Poincare},
and~\ref{lem:L2 Sobolev} once we observe that,
for all $K\in\mesh$, $(\opRec(\bE_h)-\bE_h)|_K = \sum_{\vtx \in \verticeK} \bdelta_{\vtx}|_K$
owing to the partition-of-unity property~\eqref{eq:PU}. Invoking the triangle inequality
and the mesh shape-regularity concludes the proof.
\end{proof}

\begin{remark}[$p=0$] \label{rem:p=0}
The operator $\opRec$ can also be employed on $\bP\upb_0(\mesh) \subset \bP\upb_1(\mesh)$.
\end{remark}

\begin{remark}[$p$-suboptimality]
The $\frac12$-order suboptimality in~\eqref{eq:broken curl} stems from the bound~\eqref{eq:bnd_on_L} on the jump lifting operator (which, itself, follows from the discrete trace inequality~\eqref{eq: discrete_trace_inv}).
The $\frac32$-order suboptimality in \eqref{eq:L2 Poincare} inherits the $\frac12$-order suboptimality from~\eqref{eq:broken curl} to which a one-order suboptimality originating from the broken-curl, divergence-preserving Poincar\'{e} inequality is added. As this latter inequality
does not involve discrete functions, no gain in the polynomial degree can be achieved.
Finally, the $\frac12$-order suboptimality in~\eqref{eq:L2 Sobolev} stems from a Sobolev embedding invoked in a duality argument, which circumvents the need to invoke the
broken curl estimate~\eqref{eq:broken curl}.
\end{remark}

\begin{remark}[Agglomerated polytopal meshes]
Let $\mesh$ be a simplicial background mesh covering $\Dom$ exactly and let $\meshpoly$ be an agglomerated polytopal mesh built from $\mesh$ in such a way that every $\tilde{K}\in \meshpoly$ is the union of a uniformly bounded number of simplicial cells $K\in \mathcal{T}_{\tilde{K}} \subset \mesh$; see Figure \ref{polygonal meshes example} for an illustration. The above assumption on the mesh implies that there exists a constant $C_{\rm sh}\geq 1$ such that
\begin{equation}\label{ass: polytopal meshes}
h_{K} \leq h_{\tilde{K}} \leq C_{\rm sh} h_{K} \quad \forall  K \in \mathcal{T}_{\tilde{K}}.
\end{equation}
Since $\bP\upb_p(\meshpoly)\subset \bP\upb_p(\mesh)$, an $\Hrtz$-reconstruction of any field $\bE_h\in \bP\upb_p(\meshpoly)$ can be devised by considering the $\Hrtz$-reconstruction operator defined on $\bP\upb_p(\mesh)$. Since $\bE_h\in \bP\upb_p(\meshpoly)$ and its broken curl only jump across the interfaces on $\mesh$ that are subsets of interfaces of $\meshpoly$, the error $\opRec(\bE_h)-\bE_h$ can still be bounded as in~\eqref{eq:main_bounds}, both in the broken curl norm and in the $\bL^2$-norm, where the summations on the right-hand side run over the mesh cells in $\meshpoly$ and the local mesh size $h_{\tilde{K}}$ can be used owing to~\eqref{ass: polytopal meshes}. Notice that the condition~\eqref{ass: polytopal meshes} does not accommodate polytopal meshes with tiny faces. This case is left to future work. (We mention \cite{cangiani2023aposteriori} which derives a posteriori estimates for elliptic problems on polytopal meshes with tiny faces, but this work assumes that the domain is simply connected and does not rely on a partition of unity based on a simplicial background mesh.)
\end{remark}

\begin{figure}[tb]
\begin{center}
\includegraphics[width=0.35\linewidth]{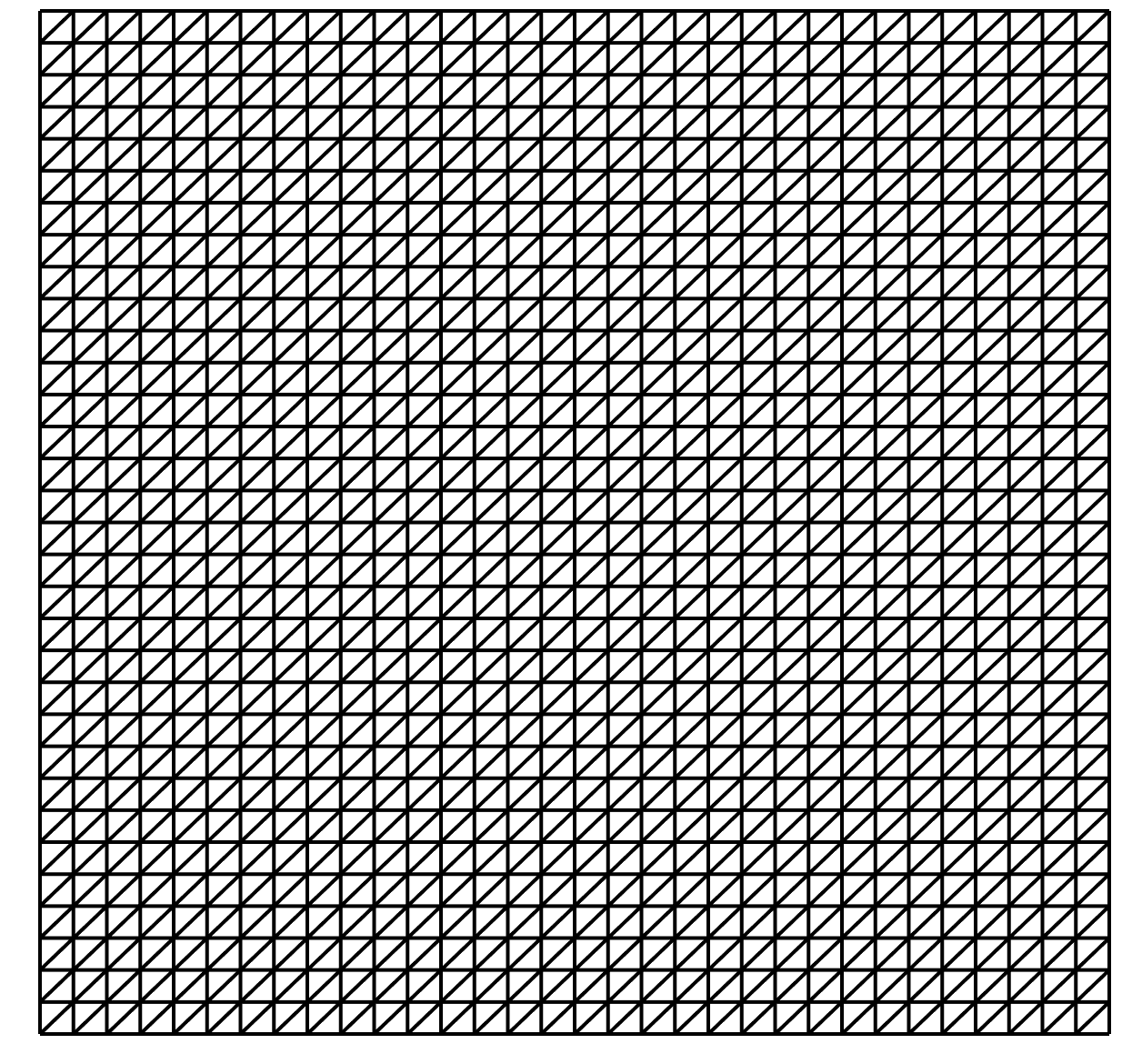}
\qquad
\includegraphics[width=0.35\linewidth]{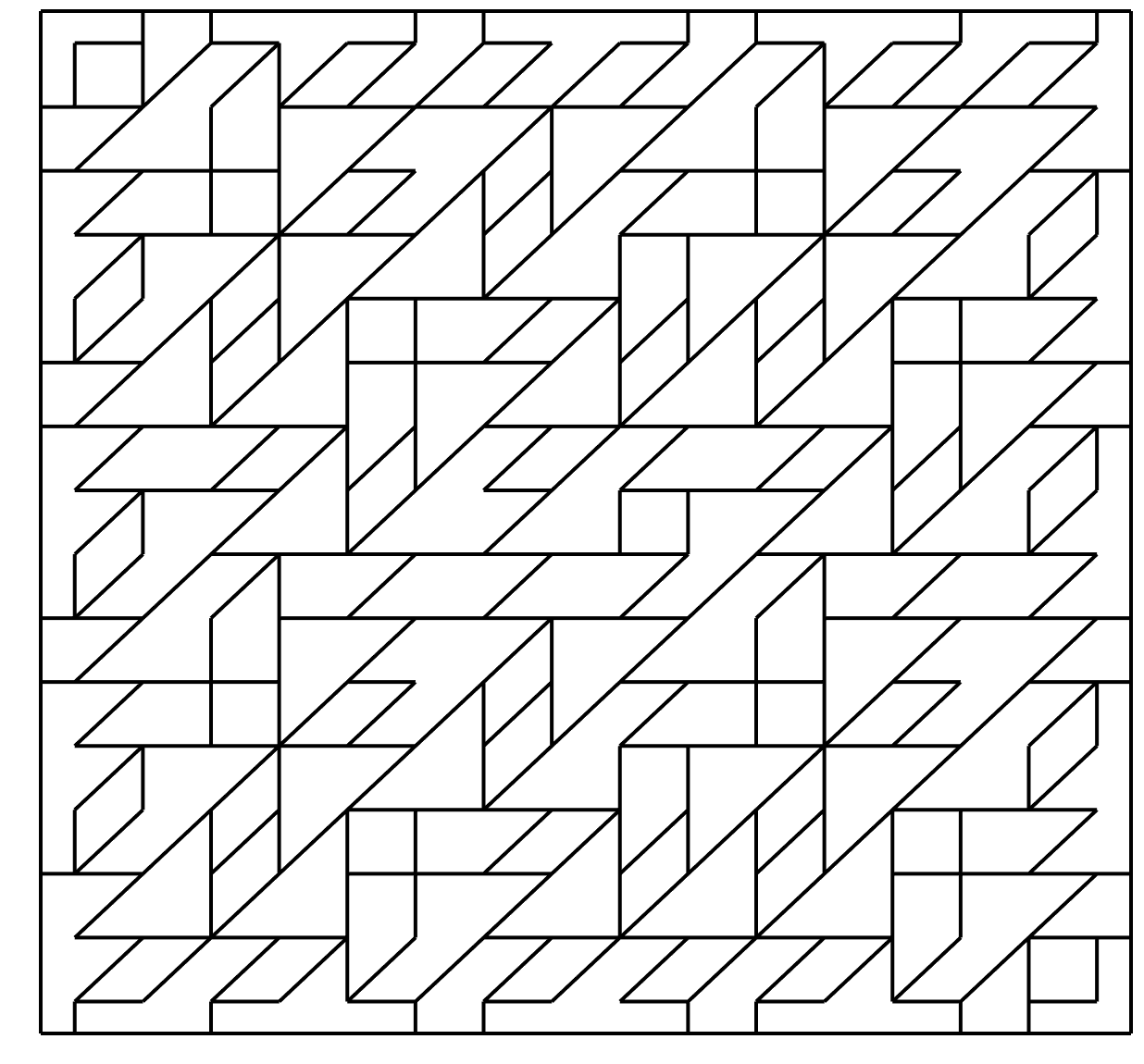}
\end{center}
\caption{Triangular mesh $\mesh$ consisting of 2048 elements (left); agglomerated polygonal mesh $\meshpoly$ consisting of 514 elements (right).}\label{polygonal meshes example}
\end{figure}
\begin{remark}[Mixed boundary conditions] \label{rem:mixed BCs}
The $\Hrtz$-reconstruction from Definition~\ref{def: Patchwise and global potential reconstruction} can be adapted with minor modifications to handle mixed boundary conditions. Let $\Gamma_{\rm D}\subset \partial\Dom$ and $\Gamma_{\rm N} :=  \partial\Dom \backslash \Gamma_{\rm D}$, with $|\Gamma_{\rm N}|\neq0$, be  Lipschitz subsets with connected interior that can be covered exactly by mesh faces.
Let $\HrotzG$ be the subspace of $\Hrot$ prescribing a homogeneous Dirichlet boundary condition only on $\Gamma_{\rm D}$. To devise a global $\HrtzG$-reconstruction, the local problems~\eqref{def: Patch reconstruction} and~\eqref{def: Patch reconstruction curl free} defined on each vertex patch are now posed on the function spaces $\bX_{\Gamma_{\rm D}}\upc(\omvtx):= \bH_{\partial \omvtx \backslash \Gamma_{\rm N}}(\text{\bf curl};\omvtx) \cap \bH_{\partial \omvtx \cap \Gamma_{\rm N}}(\text{\rm  div}=0;\omvtx)$ (where the subscripts indicate where the tangential and normal boundary condition is prescribed, respectively) and $X\upg_{\Gamma_{\rm D}}(\omvtx) := H^1_{\partial \omvtx \backslash \Gamma_{\rm N}}(\omvtx)$, for all $\vtx\in \vertice$. Then one sets again
$\bE_{\vtx}:  = \bU_{\vtx}  + \nabla \theta_{\vtx}$,
and extending $\bE_{\vtx}$ by zero to $\Dom$, one sets
$\opRec(\bE_h): = \sum_{\vtx\in \vertice} \bE_{\vtx}$, so that $\opRec(\bE_h)\in \HrotzG$.
The local broken curl estimate now relies on the local Helmholtz decomposition with mixed boundary conditions discussed in Remark \ref{Mixed BCs for Helmholtz decomposition} below. The estimate reads as follows:
$$
\|\bROT \bdelta_{\vtx}\|_{\omvtx} \le {}
C(\kappa_{\mesh}) \bigg\{
 \! \sum_{F\in \Fv \backslash \FN  } \!
 \Big(\frac{h_F}{p}\Big) \ltwo{\jump{ \bROT \bE_h}\upd_F }{F}^2
+ \Big(\frac{p^2}{h_F} \Big) \ltwo{ \jump{\bE_h}\upc_F}{F}^2
\bigg\}^{\frac12},
$$
where $\FN$ denotes the subset of mesh boundary faces lying on $\Gamma_{\rm N}$.
The local $\bL^2$-error estimate relies on the broken Poincar\'{e} with mixed boundary conditions discussed in Remark \ref{Mixed BCs for Poincare inequality} below (which requires one more conjecture, unless pure Neumann conditions are enforced).  The estimate reads as follows:
$$
\| \bdelta_{\vtx} \|_{\omvtx} \le {}  C(\kappa_{\mesh}) p \bigg\{
 \! \sum_{F\in \Fv \backslash \FN  } \!
\Big( \frac{h_F}{p}\Big)^3 \ltwo{\jump{\ROTh \bE_h}\upd_F }{F}^2
+ h_F \ltwo{ \jump{\bE_h}\upc_F}{F}^2
\bigg\}^{\frac12}.
$$
Finally, an extension of the bound from Lemma~\ref{lem:L2 Sobolev} to mixed boundary conditions is not expected to hold because the elliptic regularity pickup stated in Assumption~\ref{ass:Neumann} is not expected to hold.
\end{remark}

\begin{remark}[Globally defined $\Hrtz$-reconstruction and convex domains]
One can also consider a globally defined $\Hrtz$-reconstruction operator, say
$\optRec:\bP\upb_p(\mesh)\rightarrow \Hrotz$, without posing auxiliary problems
on vertex patches exploiting the partition of unity. Assume first that $\Dom$
has trivial topology. Then, one considers
$\bU_{\Dom}\in \bXz:=\Hrotz\cap \Hdivdivz$ solving the following well-posed problem:
\begin{equation*}
(\ROTZ \bU_{\Dom}, \ROTZ  \bW_{\Dom}) = (\bROT \bE_{h} + \bLhpz(\bE_h), \ROTZ \bW_{\Dom})
\quad \forall  \bW_{\Dom} \in  \bXz,
\end{equation*}
and $\theta_{\Dom}\in H^1_0(\Dom)$ solving the following well-posed problem:
\begin{equation*}
( \nabla \theta_{\Dom}, \nabla v_{\Dom}  ) = (\bE_h,\nabla v_{\Dom})
\quad \forall  v_{\Dom} \in  H^1_0(\Dom),
\end{equation*}
and one sets $\optRec(\bE_h):= \bU_{\Dom}  + \nabla \theta_{\Dom}$.
The error $\optRec(\bE_h)-\bE_h$ can again be bounded by the right-hand sides in~\eqref{eq:main_bounds}, both in the broken curl norm and in the $\bL^2$-norm. The proofs, which follow similar arguments to those presented in Section~\ref{sec:proofs} are omitted for brevity.
One advantage of the globally defined $\Hrtz$-reconstruction operator is achieved when the domain $\Dom$ is convex. Indeed, in this case, full elliptic regularity pickup can be invoked when deriving the $\bL^2$-error bound, leading to the $hp$-optimal estimate
\[
\| \optRec(\bE_h)-\bE_h \|
\leq C(\kappa_{\mesh}) \bigg\{ \!\su\! \Big( \frac{h_K}{p}\Big) \ltwo{ \jumpK{\bE_h}\upc}{\dK}^2 \bigg\}^{\frac12}.
\]
However, the broken curl norm bound cannot be improved and remains $p$-suboptimal by $\frac12$-order. Finally, the above global construction can also be deployed if the domain $\Dom$ has a more general topology upon posing the problem defining $\bU_\Dom$ in $\Hrotz\cap \Hrotzrotz^\perp$, where $\Hrotzrotz$ is the subspace of $\Hrotz$ composed of curl-free fields and orthogonalities are meant in $\bL^2$. However, the constants in the bounds on $\optRec(\bE_h)-\bE_h$ now depend on the topology of $\Dom$.
\end{remark}

\section{Preparatory results} \label{sec:preparatory}

In this section, we collect three preparatory results. The first result is a (classical)
local Helmholtz decomposition to be invoked in the proof of Lemma~\ref{lem:broken curl}.
The second one is a local broken-curl, divergence-preserving Poincar\'e inequality to be invoked in the proof of
Lemma~\ref{lem:L2 Poincare}. Finally, the third one is a local Sobolev embedding to be
invoked in the proof of Lemma~\ref{lem:L2 Sobolev}.

\begin{lemma}[Local Helmholtz decomposition] \label{lemma: Helmholtz decomposition}
For all $\vtx\in\vertice$ and all $\bv \in \bL^2(\omvtx)$, there exist $\bpsi \in \bXzv$
(see~\eqref{eq:def_bXzv}) and $\xi\in X\upg(\omvtx)$ (see~\eqref{def: H1_patch}),
so that
\begin{subequations} \label{eq:Helmholtz} \begin{align} \label{eq:helm curl}
\bv &=  \ROTZ  \bpsi +\nabla \xi \quad \text{in }\, \omvtx, \\
\ltwo{\bv}{\omvtx}^2&= \ltwo{ \ROTZ  \bpsi }{\omvtx}^2 +\ltwo{ \nabla  \xi }{\omvtx}^2.
\label{eq: new HD stability}
\end{align}\end{subequations}
\end{lemma}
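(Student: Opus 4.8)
The plan is to realize \eqref{eq:Helmholtz} as an $\bL^2$-orthogonal Hodge--Helmholtz splitting of $\bv$, extracting the gradient part $\nabla\xi$ from a Neumann problem on the patch and the solenoidal part $\ROTZ\bpsi$ from a vector-potential construction that relies on the trivial topology of $\omvtx$. The orthogonality built into the Neumann projection will then deliver the Pythagorean identity \eqref{eq: new HD stability} for free.

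First I would define $\xi$. The bilinear form $(\nabla\SCAL,\nabla\SCAL)_{\omvtx}$ is coercive on $X\upg(\omvtx)=H^1(\omvtx)/\Real$ by the Poincaré--Wirtinger inequality, so Lax--Milgram provides a unique $\xi\in X\upg(\omvtx)$ with
\[
(\nabla\xi,\nabla\rho)_{\omvtx}=(\bv,\nabla\rho)_{\omvtx}\qquad\forall\rho\in X\upg(\omvtx).
\]
Set $\bw:=\bv-\nabla\xi$. The above identity is equivalent to $(\bw,\nabla\rho)_{\omvtx}=0$ for all $\rho\in H^1(\omvtx)$. Testing first with $\rho\in H^1_0(\omvtx)$ yields $\DIV\bw=0$ in $\omvtx$, and testing then with arbitrary $\rho$ and integrating by parts yields $\gamma\upd_{\partial\omvtx}(\bw)=0$; hence $\bw\in\Hdivzvz$ is divergence-free with vanishing normal trace.

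Second I would construct the solenoidal potential, and this is where the geometry of the vertex patch is essential: $\omvtx$ is simply connected with connected boundary, so the associated spaces of harmonic fields (tangential and normal) are trivial. By the classical tangential vector-potential theory on such domains (see, e.g., Girault--Raviart, and Amrouche--Bernardi--Dauge--Girault), every $\bw\in\Hdivzvz$ is the curl of a uniquely determined $\bpsi\in\bXzv=\Hrotzv\cap\Hdivvz$, i.e.\ $\ROTZ\bpsi=\bw$; simple connectedness is exactly what guarantees that a divergence-free, zero-normal-trace field admits a zero-tangential-trace vector potential, while the connectedness of $\partial\omvtx$ removes the extra flux constraints. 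Combining with the first step gives $\bv=\ROTZ\bpsi+\nabla\xi$, which is \eqref{eq:helm curl}.

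Finally, the identity \eqref{eq: new HD stability} follows from orthogonality. Since $\ROTZ\bpsi=\bw$ is divergence-free with zero normal trace and $\xi\in H^1(\omvtx)$, integration by parts gives
\[
(\ROTZ\bpsi,\nabla\xi)_{\omvtx}=-(\DIV\bw,\xi)_{\omvtx}+\langle\gamma\upd_{\partial\omvtx}(\bw),\xi\rangle_{\partial\omvtx}=0,
\]
so expanding $\ltwo{\bv}{\omvtx}^2=\ltwo{\ROTZ\bpsi+\nabla\xi}{\omvtx}^2$ yields \eqref{eq: new HD stability}. The only non-elementary ingredient, and the step I expect to be the main obstacle to justify carefully, is the vector-potential existence in the second step; all topological subtleties are confined there, which is precisely why the whole reconstruction is carried out patchwise on the simply connected subdomains $\omvtx$ rather than globally on $\Dom$.
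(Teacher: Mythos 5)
Your proof is correct and follows essentially the same route as the paper's: solve the Neumann problem on $X\upg(\omvtx)$ to extract $\nabla \xi$, observe that the remainder lies in $\Hdivzvz$, produce a tangential vector potential for it using the trivial topology of the vertex patch, and conclude \eqref{eq: new HD stability} by orthogonality. The only (immaterial) difference is that the paper builds $\bpsi$ in two steps --- a field $\bzeta\in\Hrotzv$ with the prescribed curl obtained from a regular-decomposition result, followed by a gradient correction from a Dirichlet problem to enforce $\DIV\bpsi=0$ --- whereas you invoke the classical divergence-free tangential vector-potential theorem directly.
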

\begin{proof}
For completeness, we sketch a short proof.
We solve the Neumann problem
$
(\nabla \xi , \nabla w)_{\omvtx} =(\bv, \nabla w)_{\omvtx}
$
in $X\upg(\omvtx)$, and observe that
$
\nabla \xi - \bv \in \Hdivzvz
$. Using \cite[Theorem 3]{HP19_822}, we infer that there exists $\bzeta \in \Hrotzv$ such that
$
\ROTZ \bzeta = \bv - \nabla \xi .
$
Then, we solve the Dirichlet problem
$
( \nabla \gamma , \nabla w)_{\omvtx} =( \bzeta, \nabla w)_{\omvtx}
$
in $X\upg_0(\omvtx)$ and set $\bpsi = \bzeta - \nabla \gamma$. The pair $(\bpsi, \xi)$ satisfies all the required properties.
\end{proof}

\begin{remark}[Mixed boundary conditions]\label{Mixed BCs for Helmholtz decomposition}
For mixed boundary conditions, Lemma~\ref{lemma: Helmholtz decomposition} is adapted as follows:
For all $\vtx\in\vertice$ and all $\bv \in \bL^2(\omvtx)$, there exist $\bpsi \in \bX_{\Gamma_{\rm D}}\upc (\omvtx)$ and $\xi\in H^1_{\partial \omvtx \cap \Gamma_{\rm N}}(\omvtx)$,
so that \eqref{eq:Helmholtz} still holds true.
To prove this result, one solves Poisson problems with mixed boundary conditions.
\end{remark}

\begin{lemma}[Local broken-curl, divergence-preserving Poincar\'{e} inequality] \label{lemma: broken_Poincare inequality}
 There exists a constant $C(\kappa_{\mesh})$ so that, for all $\vtx \in\vertice$, all $\bv\in  \bH^1(\meshv):=  \{\bw \in \bL^2(\omvtx)  \st \GRAD_h \bw\in \bL^2(\omvtx)\}$, and all $\bv\upc_{\vtx}\in \Hrotzv$ with $\bv\upc_{\vtx}-\bv\in \Hdivvz$, the following holds:
\begin{equation}\label{eq:broken Poincare}
\| \bv\upc_{\vtx} - \bv\|_{\omvtx} \le {}
C(\kappa_{\mesh}) h_{\vtx} \bigg\{ \|\bROT (\bv\upc_{\vtx}-\bv) \|_{\omvtx}^2
+
\! \sum_{F\in \bFa } \!
h_F^{-1} \ltwo{ \jump{\bv}\upc_F}{F}^2 \bigg\}^{\frac12},
\end{equation}
recalling that the jump operator at $F$ denotes the actual trace whenever $F\in\Fb$.
\end{lemma}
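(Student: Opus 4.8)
\textit{Proof plan.} The plan is to prove the estimate for a generic admissible reconstruction, i.e.\ for any $\bv\upc_\vtx\in\Hrotzv$ such that $\bw:=\bv\upc_\vtx-\bv\in\Hdivvz$, and to exhibit one such field to settle existence. For existence, I would first lift the tangential data of $\bv$ (the jumps $\jump{\bv}\upc_F$ across $F\in\Fv$ and the traces $\gamma\upc_{K(F),F}(\bv)$ on $F\in\Fv^\partial$) into a broken field $\bq_0\in\bH^1(\meshv)$ carrying exactly these data, so that $\bv-\bq_0\in\Hrotzv$; then I would solve the homogeneous Dirichlet problem $(\nabla\phi,\nabla\eta)_{\omvtx}=(\bq_0,\nabla\eta)_{\omvtx}$ for all $\eta\in H^1_0(\omvtx)$ and set $\bv\upc_\vtx:=\bv-\bq_0+\nabla\phi$. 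Since $\nabla\phi\in\Hrotzv$ whenever $\phi\in H^1_0(\omvtx)$ (its tangential trace vanishes because $\phi$ vanishes on $\partial\omvtx$), this keeps $\bv\upc_\vtx\in\Hrotzv$, while the variational identity defining $\phi$ forces $(\bw,\nabla\eta)_{\omvtx}=0$ for all $\eta\in H^1_0(\omvtx)$, hence $\DIV\bw=0$ and $\bw\in\Hdivvz$.

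To establish the bound, I would introduce a tangential lifting $\bq$ of the tangential jumps of $\bw$. Crucially, since any admissible $\bv\upc_\vtx$ is tangentially continuous and vanishes on $\partial\omvtx$, these jumps depend only on $\bv$, coinciding with $-\jump{\bv}\upc_F$ on $\Fv$ and $-\gamma\upc_{K(F),F}(\bv)$ on $\Fv^\partial$. I choose $\bq$ so that $\bw^c:=\bw-\bq\in\Hrotzv$, with the standard face-by-face scalings $\|\bq\|_{\omvtx}^2\lesssim\sum_{F\in\Fv}h_F\ltwo{\jump{\bv}\upc_F}{F}^2+\sum_{F\in\Fv^\partial}h_F\ltwo{\gamma\upc_{K(F),F}(\bv)}{F}^2$ and $\|\ROTh\bq\|_{\omvtx}^2\lesssim\sum_{F\in\Fv}h_F^{-1}\ltwo{\jump{\bv}\upc_F}{F}^2+\sum_{F\in\Fv^\partial}h_F^{-1}\ltwo{\gamma\upc_{K(F),F}(\bv)}{F}^2$ (a normal extension of each face datum with a cutoff). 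I would then split $\bw^c=\nabla\alpha+\bz$, where $\alpha\in H^1_0(\omvtx)$ solves $(\nabla\alpha,\nabla\eta)_{\omvtx}=(\bw^c,\nabla\eta)_{\omvtx}$ for all $\eta\in H^1_0(\omvtx)$ and $\bz:=\bw^c-\nabla\alpha$. By construction $\DIV\bz=0$, and since $\bw^c,\nabla\alpha\in\Hrotzv$ also $\bz\in\Hrotzv$; hence $\bz\in\bXzv$, and the splitting is $\bL^2$-orthogonal, so $\|\bw^c\|_{\omvtx}^2=\|\nabla\alpha\|_{\omvtx}^2+\|\bz\|_{\omvtx}^2$.

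The heart of the argument is the divergence-preserving cancellation: because $\bw\in\Hdivvz$ and $\alpha\in H^1_0(\omvtx)$, integration by parts gives $(\bw,\nabla\alpha)_{\omvtx}=0$, whence $\|\nabla\alpha\|_{\omvtx}^2=(\bw^c,\nabla\alpha)_{\omvtx}=-(\bq,\nabla\alpha)_{\omvtx}\le\|\bq\|_{\omvtx}\|\nabla\alpha\|_{\omvtx}$, i.e.\ $\|\nabla\alpha\|_{\omvtx}\le\|\bq\|_{\omvtx}$. For the solenoidal part I would invoke the Maxwell--Friedrichs inequality $\|\bz\|_{\omvtx}\le C(\kappa_{\mesh})\,h_\vtx\|\bROT\bz\|_{\omvtx}$, valid because $\omvtx$ is simply connected with connected boundary, so that the space of harmonic fields (curl-free, divergence-free, with vanishing tangential trace) is trivial. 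Since $\bROT\bz=\bROT\bw^c$, combining the two estimates with $\bw=\bw^c+\bq$ and $\bROT\bw^c=\bROT\bw-\ROTh\bq$ yields $\|\bw\|_{\omvtx}\le 2\|\bq\|_{\omvtx}+C(\kappa_{\mesh})\,h_\vtx\big(\|\bROT\bw\|_{\omvtx}+\|\ROTh\bq\|_{\omvtx}\big)$. Inserting the lifting bounds and using $h_F\le h_\vtx$ (so that $h_F\le h_\vtx^2 h_F^{-1}$) absorbs both $\|\bq\|_{\omvtx}$ and $h_\vtx\|\ROTh\bq\|_{\omvtx}$ into the stated jump terms, giving \eqref{eq:broken Poincare}.

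The main obstacle I anticipate is twofold and essentially technical. First, the uniformity of the Maxwell--Friedrichs constant as $C(\kappa_{\mesh})\,h_\vtx$ over all vertex patches: this calls for a scaling argument to an $O(1)$-sized reference patch, together with the observation that mesh shape-regularity limits the patches to finitely many shape patterns up to scaling; it is precisely here that the trivial topology of $\omvtx$ is indispensable, since it removes the harmonic-field kernel that would otherwise spoil the inequality. Second, the existence of the tangential lifting $\bq$ with the stated $\bL^2$- and broken-curl scalings, which is standard but the most computational step. By contrast, the conceptual core—the cancellation $(\bw,\nabla\alpha)_{\omvtx}=0$ furnished by $\bw$ being divergence-free—is immediate once the $\Hrotzv$-Helmholtz splitting with an $H^1_0(\omvtx)$ potential is in place, and this is what justifies the qualifier ``divergence-preserving''.
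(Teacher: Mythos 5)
Your route is genuinely different from the paper's, and several of its ingredients are sound: the cancellation $(\bw,\nabla\alpha)_{\omvtx}=0$ for divergence-free $\bw$ against $\alpha\in H^1_0(\omvtx)$ is correct, the orthogonal splitting of $\bw-\boldsymbol{q}$ into $\nabla\alpha$ plus an element of $\bXzv$ is legitimate, and the Maxwell--Friedrichs inequality on $\bXzv$ does hold on vertex patches (its uniformity in the shape-regularity is essentially what \cite{GuzSal21} delivers; note, though, that shape regularity gives a compact, not finite, family of patch shapes, so ``finitely many shape patterns'' is not the right argument). The paper dispenses with all of this scaffolding: since $\bw:=\bv\upc_\vtx-\bv$ is divergence-free on the simply connected patch, it writes $\bw=\ROT\bphi$ with $\bphi\in[H^1(\omvtx)/\Real]^d$ and $\|\nabla\bphi\|_{\omvtx}\lesssim\|\bw\|_{\omvtx}$ (this is where $\DIV\bw=0$ and the trivial topology enter, and where \cite{GuzSal21} supplies the shape-regular constant), then computes $\|\bw\|_{\omvtx}^2=(\bw,\ROT\bphi)_{\omvtx}$ by cellwise integration by parts, so that the tangential jumps of $\bv$ are paired with traces of the \emph{continuous} field $\bphi$; these traces cost only $h_F^{1/2}h_\vtx^{1/2}\|\nabla\bphi\|_{\omvtx}$ by a multiplicative trace inequality and the zero-mean Poincar\'e inequality, which yields the $h_F^{-1}$-weighted jump terms without ever lifting anything.

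The genuine gap in your argument is the lifting bound $\|\ROTh\boldsymbol{q}\|_{\omvtx}^2\lesssim\sum_{F\in\Fv}h_F^{-1}\ltwo{\jump{\bv}\upc_F}{F}^2+\dots$, on which the whole proof hinges (you need $h_\vtx\|\ROTh\boldsymbol{q}\|_{\omvtx}$ absorbed into the jump terms). No lifting reproducing the tangential jumps \emph{exactly} can satisfy this with a constant depending only on shape regularity. Your normal-extension-with-cutoff visibly fails: its curl contains the tangential derivatives of the face datum, contributing terms of order $h_F\|\nabla_F(\jump{\bv}\upc_F)\|_{F}^2$, which are not controlled by $h_F^{-1}\ltwo{\jump{\bv}\upc_F}{F}^2$. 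And no smarter choice helps: for any broken field $\boldsymbol{q}$ whose tangential jump across $F$ equals a prescribed datum $\bg$, the jump of the normal component of $\ROTh\boldsymbol{q}$ across $F$ equals the surface divergence of $\bg$ (a first-order surface operator), and this quantity is controlled in $\bH^{-\frac12}(F)$ by $\|\ROTh\boldsymbol{q}\|$ on the adjacent cells because $\ROTh\boldsymbol{q}$ is cellwise divergence-free. Choosing $\bg$ oscillatory with unit $\bL^2(F)$-norm (realizable as the jump of some $\bv\in\bH^1(\meshv)$) therefore forces $\|\ROTh\boldsymbol{q}\|$ to be arbitrarily large. Since the lemma is stated for all $\bv\in\bH^1(\meshv)$ with only $\bL^2$ face norms on the right-hand side, this cannot be repaired by inverse inequalities (which would in any case introduce powers of $p$ that the statement does not allow). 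The fix is structural: keep the jumps paired with a controlled continuous test field, as the paper does, rather than trying to lift them.
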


\begin{proof}
Let $\vtx\in\vertice$, let $\bv\in \bH^1(\meshv)$, and let $\bv\upc_{\vtx}\in \Hrotzv$ be such that $\bv\upc_{\vtx}-\bv\in \Hdivvz$. Recall that
$X\upg(\omvtx):=H^1(\omvtx)/\Real$.

(1) Since $\omvtx$ is a simply connected, Lipschitz domain, we infer from \cite[Lemma 3.5]{Amrouche98} that there exists $\bphi \in  \bX\upg(\omvtx):=[X\upg(\omvtx)]^d$ such that
$\ROT \bphi = \bv\upc_{\vtx} - \bv$ in $\omvtx$ (notice that $\DIV \bphi = 0$ in $\omvtx$ as well, but this additional property is not needed here).
However, the stability constant on $\ltwo{\nabla \bphi}{\omvtx}$ is not known explicitly in terms of the geometry of $\omvtx$.
For interior vertices, the vertex patch is a star-shaped domain with respect to a ball whose radius is comparable to the size of the patch, with constant depending only on the mesh regularity. This result is established in \cite[Proposition 8.2]{chaumontfrelet:hal-05204325} in any dimension $d\ge2$ by reduction to the case $d=2$ where the result is known from \cite{LeePre:79}. Invoking \cite[Corollary 28]{GuzSal21} then gives $\bphi \in  \bX\upg(\omvtx)$ such that
$\ROT \bphi = \bv\upc_{\vtx} - \bv$ in $\omvtx$ and a stability constant $C(\kappa_{\mesh})$ only depending on
the mesh shape-regularity so that the following bound holds:
\begin{equation}
\ltwo{\nabla \bphi}{\omvtx}\leq C(\kappa_{\mesh}) \ltwo{\ROT \bphi}{\omvtx}
= C(\kappa_{\mesh})\ltwo{\bv\upc_{\vtx}-\bv}{\omvtx}. \label{eq:curl norm}
\end{equation}
For boundary vertices, the star-shapedness of the patch can be affected by the domain boundary so that the radius of the ball can deteriorate, e.g., near re-entrant corners. The way forward is to view each boundary vertex patch as a chain of star-shaped domains, and invoke \cite[Theorem 35]{GuzSal21} to obtain again the bound~\eqref{eq:curl norm}. Following \cite[Appendix B]{ErnVohralik2020}, we use the notion of shellability and enumerate the simplices in the patch as $\{K_i\}_{i=1}^{n}$ so that two successive simplices in the enumeration share a face, say $F_i=\partial K_i\cap \partial K_{i+1}$. We then define the subdomains $\omvtx^i := K_i \cup K_{i+1}$ for all $i\in\{1{:}n{-}1\}$. As a consequence of mesh shape-regularity, each $\omvtx^i$ is star-shaped with respect to a $d$-dimensional ball centered at the barycenter of the face $F_i$, with radius comparable to $h_{F_i}$. Moreover, we have the overlap relation $K_{i+1} = \omvtx^i \cap \omvtx^{i+1}$. All the conditions stated at the beginning of \cite[Section 5]{GuzSal21} are thus fulfilled to consider the boundary vertex patch as a chain of star-shaped domains.

(2) Integrating by parts the curl operator in each cell $K\in \meshv$,
we infer that
\begin{equation*}
\begin{split}
\ltwo{\bv\upc_{\vtx} - \bv}{\omvtx}^2
&= (\bv\upc_{\vtx} - \bv,  \ROT  \bphi)_{\omvtx}\\
& = (\bv\upc_{\vtx} ,\ROT\bphi)_{\omvtx}  - (\bv,\ROT \bphi)_{\omvtx} \\
& = - ( \bROT (\bv\upc_{\vtx}-\bv), \bphi  )_{\omvtx} - \suv (\bv,\n_K \times\bphi)_{\dK}  =: T_1+T_2,
\end{split}
\end{equation*}
and it remains to bound $T_1$ and $T_2$.

(2a) Bound on $T_1$. Using the Cauchy--Schwarz inequality, recalling that $\bphi$ has zero mean-value over $\omvtx$, and invoking the Poincar\'{e} inequality on the vertex patch (see \cite[Theorem 3.2]{VeeserVerfurth12} and also \cite[Lem.~5.7]{ErnGu:17_quasi}),  we infer that
$$
|T_{1}|
\lesssim h_{\vtx} \ltwo{\bROT (\bv\upc_{\vtx} - \bv)}{\omvtx} \ltwo{ \nabla \bphi}{\omvtx}
\lesssim h_{\vtx} \ltwo{\bROT (\bv\upc_{\vtx} - \bv)}{\omvtx} \ltwo{\bv\upc_{\vtx}-\bv}{\omvtx},
$$
where the second bound follows from~\eqref{eq:curl norm}.

(2b) Bound on $T_2$. Re-organizing the summation over the mesh faces in $\bFa$ as those in $\bFa\cap \Fint$ where $\bphi$ has continuous trace and those in $\bFa\cap \Fb$ where the jump operator denotes the actual trace, we obtain
\[
T_2 = \! \sum_{F\in \bFa} \! (\jump{\bv}\upc_F,\n_F \times\bphi)_{F}.
\]
Invoking the same arguments as above together with the mesh shape-regularity and
a multiplicative trace inequality
(see, e.g., \cite[Lem.~12.15]{EG_volI}), we infer that
\begin{equation*}
\begin{split}
|T_{2}|
&\leq {}  \bigg\{
 \! \sum_{F\in \bFa } \!
h_F^{-1} \ltwo{ \jump{\bv}\upc_F}{F}^2 \bigg\}^{\frac12}
\bigg\{
 \! \sum_{F\in \bFa}  \!
 h_F \ltwo{\bphi}{F}^2
\bigg\}^{\frac12} \\
&\lesssim h_{\vtx}\bigg\{
\! \sum_{F\in  \bFa} \!
h_F^{-1} \ltwo{ \jump{\bv}\upc_F}{F}^2 \bigg\}^{\frac12}
\ltwo{\bv\upc_{\vtx} - \bv}{\omvtx}.
\end{split}
\end{equation*}
Putting the bounds on $T_1$ and $T_2$ together proves \eqref{eq:broken Poincare}.
\end{proof}

\begin{remark}[Extension]
A simple inspection of the proof shows that Lemma~\ref{lemma: broken_Poincare inequality} holds true for any $\bv\in \bL^2(\omvtx)$ with $\ROTh \bv\in \bL^2(\omvtx)$ and
$(\bn_K\times \bv|_K)|_F\in \bL^2(F)$ for all $K\in\meshv$ and all $F\in\calFK$.
\end{remark}

\begin{remark}[Mixed boundary conditions]\label{Mixed BCs for Poincare inequality}
For mixed boundary conditions, the statement of Lemma~\ref{lemma: broken_Poincare inequality} is adapted as follows:
There exists a constant $C(\kappa_{\mesh})$ so that, for all $\vtx \in\vertice$, all $\bv\in  \bH^1(\meshv)$, and all $\bv\upc_{\vtx}\in \bH_{\partial \omvtx \backslash \Gamma_{\rm N}}(\text{\bf curl};\omvtx)$ with $\bv\upc_{\vtx}-\bv\in \bH_{\partial \omvtx \cap \Gamma_{\rm N}}(\text{\rm  div}=0;\omvtx)$, the following holds:
\begin{equation*}
\| \bv\upc_{\vtx} - \bv\|_{\omvtx} \le
C(\kappa_{\mesh}) h_{\vtx} \bigg\{ \|\bROT (\bv\upc_{\vtx}-\bv) \|_{\omvtx}^2
+
\! \sum_{F\in \bFa \backslash \FN} \!
h_F^{-1} \ltwo{ \jump{\bv}\upc_F}{F}^2 \bigg\}^{\frac12}.
\end{equation*}
The proof of this inequality is, however, not straightforward. First, \cite[Lemma 3.5]{Amrouche98} can be replaced by \cite[Theorem 3]{HP19_822} on regular decompositions with mixed boundary conditions, but the stability constant is not given explicitly in terms of the geometry of $\omvtx$. To have a stability constant only depending on the mesh shape-regularity, one needs to conjecture that the result of \cite{GuzSal21}  can be extended to mixed boundary conditions. We notice that the extension holds for pure Neumann  conditions \cite[Theorem 32]{GuzSal21}.
\end{remark}

\begin{remark}[Local broken-divergence, curl-preserving Poincar\'e inequality]
\!One can adapt the proof of Lemma~\ref{lemma: broken_Poincare inequality} to establish the following local broken-divergence, curl-preserving Poincar\'e inequality:
There exists a constant $C(\kappa_{\mesh})$ so that, for all $\vtx \in\vertice$, all $\bv\in  \bH^1(\meshv)$, and all $\bv\upc_{\vtx}\in \Hdivzv$ with $\bv\upc_{\vtx}-\bv\in \Hrotvz$, the following holds:
\begin{equation*}
\| \bv\upc_{\vtx} - \bv\|_{\omvtx} \le {}
C(\kappa_{\mesh}) h_{\vtx}\bigg\{ \|\DIVh (\bv\upc_{\vtx}-\bv) \|_{\omvtx}^2
+
\! \sum_{F\in \bFa } \!
h_F^{-1} \ltwo{ \jump{\bv}\upd_F}{F}^2
\bigg\}^{\frac12}.
\end{equation*}
\end{remark}

\begin{lemma}[Local Sobolev embedding]\label{lemma: regularity}
Under Assumption~\ref{ass:Neumann} with $\delta:=\delta(\kappa_{\mesh})
\in(0,\frac12]$,
there exists a constant $C(\kappa_{\mesh})$ (depending on $\delta$) such that, for all $\vtx\in\vertice$ and all $\bv \in \bXdivzv$ (see~\eqref{eq:def_bXdivzv}),
\begin{equation}\label{eq: regularity on X}
|\bv|_{\bH^{\frac12 +\delta}(K)}
\leq C(\kappa_{\mesh}) h_{\vtx}^{\frac12-\delta} \ltwo{\ROT \bv}{\omvtx} \quad\forall K\in\meshv.
\end{equation}

\end{lemma}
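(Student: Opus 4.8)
The plan is to split $\bv$ into a regular, $\bH^1$-part that carries its curl and a pure gradient remainder, and to bound each contribution by $\ltwo{\ROT\bv}{\omvtx}$ with the correct $h_\vtx$-scaling. Set $\bj:=\ROT\bv\in\bL^2(\omvtx)$; since $\DIV\bj=0$, the regular potential result of \cite{GuzSal21} (already invoked in the proof of Lemma~\ref{lemma: broken_Poincare inequality}) provides $\bz\in\bX\upg(\omvtx)\subset\bH^1(\omvtx)$ with $\ROT\bz=\bj$ and a shape-regularity-uniform stability bound $\|\nabla\bz\|_{\omvtx}\le C(\kappa_{\mesh})\ltwo{\ROT\bv}{\omvtx}$. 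The field $\bv-\bz$ is then curl-free, so on the simply connected patch $\omvtx$ there exists $q\in X\upg(\omvtx)$ with $\bv=\bz+\nabla q$.

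The key observation is that $q$ solves exactly the Neumann problem of Assumption~\ref{ass:Neumann} with data $\bg=-\bz$. Indeed, for all $\rho\in X\upg(\omvtx)$ one has $(\nabla q,\nabla\rho)_{\omvtx}=(\bv,\nabla\rho)_{\omvtx}-(\bz,\nabla\rho)_{\omvtx}$, and integration by parts gives $(\bv,\nabla\rho)_{\omvtx}=-(\DIV\bv,\rho)_{\omvtx}+\langle\bn_{\omvtx}{\cdot}\bv,\rho\rangle_{\partial\omvtx}=0$ because $\bv\in\Hdivzvz$ is divergence-free with vanishing normal trace. Hence $(\nabla q,\nabla\rho)_{\omvtx}=(-\bz,\nabla\rho)_{\omvtx}$, i.e.\ $q=\zeta_{-\bz}$. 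Applying~\eqref{eq:pickup} with $\bg=-\bz$ and keeping only the seminorm term on the left-hand side yields $|\nabla q|_{H^{\frac12+\delta}(\omvtx)}\le C(\kappa_{\mesh})h_\vtx^{\frac12-\delta}\|\nabla\bz\|_{\omvtx}\le C(\kappa_{\mesh})h_\vtx^{\frac12-\delta}\ltwo{\ROT\bv}{\omvtx}$, where the exponent $\frac12-\delta=2-(\frac32+\delta)$ is read off directly from~\eqref{eq:pickup}. This is precisely the point where the zero normal trace of $\bv$ is essential, since it is what makes the data of the Neumann problem equal to $-\bz$ with no boundary contribution.

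It remains to control the regular part. Since $\delta\le\frac12$, the embedding $H^1\hookrightarrow H^{\frac12+\delta}$ holds, and a scaling argument (mapping $\omvtx$ to unit diameter, with the shape-regularity controlling the geometry of the rescaled patch) gives $|\bz|_{\bH^{\frac12+\delta}(\omvtx)}\le C(\kappa_{\mesh})h_\vtx^{\frac12-\delta}\|\nabla\bz\|_{\omvtx}\le C(\kappa_{\mesh})h_\vtx^{\frac12-\delta}\ltwo{\ROT\bv}{\omvtx}$, the factor $h_\vtx^{\frac12-\delta}$ being exactly the difference in homogeneity of the Gagliardo seminorms of $H^{\frac12+\delta}$ and $H^1$. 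Adding the two contributions via the triangle inequality bounds $|\bv|_{\bH^{\frac12+\delta}(\omvtx)}$, and since $K\subset\omvtx$ the double-integral seminorm is monotone, so $|\bv|_{\bH^{\frac12+\delta}(K)}\le|\bv|_{\bH^{\frac12+\delta}(\omvtx)}$, which is~\eqref{eq: regularity on X}. I expect the main obstacle to be step one, namely securing the regular potential $\bz$ with a stability constant depending only on $\kappa_{\mesh}$ rather than on the unknown geometry of $\omvtx$ (this is why \cite{GuzSal21}, and not the classical \cite{Amrouche98}, is needed); a secondary technical point is making the $H^1\hookrightarrow H^{\frac12+\delta}$ embedding constant uniform over the family of vertex patches through the shape-regularity scaling.
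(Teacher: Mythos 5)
Your proposal is correct and follows essentially the same route as the paper: a regular decomposition $\bv=\bphi+\nabla p$ with $\bphi\in\bH^1$ obtained from \cite{GuzSal21}, identification of the gradient part as the solution of the Neumann problem of Assumption~\ref{ass:Neumann} (using that $\bv$ is divergence-free with zero normal trace), the pickup estimate~\eqref{eq:pickup} for $\nabla p$, and a scaled $H^1\hookrightarrow H^{\frac12+\delta}$ embedding for the regular part. The only (cosmetic) difference is that the paper performs that last embedding cellwise on the fixed reference simplex, which sidesteps the uniformity-over-patch-shapes issue you flag at the end, and your sign bookkeeping ($\bg=-\bz$) is the tidier version of what the paper writes.
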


\begin{proof}
(1) Let $\vtx\in\vertice$ and $\bv \in \bXdivzv$. Invoking the same arguments as in Step~(1) of the proof of Lemma~\ref{lemma: broken_Poincare inequality}, we infer that there exists $\bphi \in \bX\upg(\omvtx)$ such that $\ROT \bphi = \ROT \bv$
in $\omvtx$, and
\begin{equation}\label{relation 1}
\ltwo{\nabla \bphi}{\omvtx}\leq C(\kappa_{\mesh}) \ltwo{\ROT \bphi}{\omvtx} =
C(\kappa_{\mesh}) \ltwo{\ROT \bv}{\omvtx}.
\end{equation}
Since $\ROT(\bv-\bphi)=\bzero$, there exists a function $p\in X\upg(\omvtx)$ such that $\nabla p = \bphi - \bv$. Altogether, this leads to the regular decomposition
\[
\bv = -\nabla p + \bphi, \quad p\in X\upg(\omvtx), \quad \bphi \in \bX\upg(\omvtx).
\]

(2) Since $\bv \in \Hdivzdivz$ by assumption, we infer
that $(\nabla p,\!\nabla \rho)_{\omvtx} \!\!=\! (\bphi,\!\nabla\rho)_{\omvtx}$ for all $\rho
\in  X\upg(\omvtx)$. Therefore, we can apply Assumption~\ref{ass:Neumann}
to $\bg:=\bphi$, and observing that $\zeta_{\bg}=p$, we infer that
\[
h_\vtx^{\frac32+\delta} |\nabla p|_{\bH^{\frac12+\delta}(\omvtx)} \lesssim h_\vtx^2\|\nabla\bphi\|_{\omvtx}
\lesssim h_\vtx^2\|\ROT\bv\|_{\omvtx},
\]
where the second bound follows from~\eqref{relation 1}. Since $|\nabla p|_{\bH^{\frac12+\delta}(K)}\le |\nabla p|_{\bH^{\frac12+\delta}(\omvtx)}$ for all $K\in\meshv$, we infer that
\begin{equation} \label{eq:bnd_gradp_Sobolev}
|\nabla p|_{\bH^{\frac12+\delta}(K)} \lesssim  h_\vtx^{\frac12-\delta}\|\ROT\bv\|_{\omvtx}.
\end{equation}

(3) For all $K\in\meshv$, invoking the geometric mapping to the reference simplex $\wK$
and the embedding $|\wbphi|_{\bH^{\frac12 +\delta}(\wK)} \le \wC \|\GRAD \wbphi\|_{\wK}$, we infer that
\[
|\bphi|_{\bH^{\frac12 +\delta}(K)} \lesssim h_K^{\frac12-\delta}\|\nabla\bphi\|_K
\le h_K^{\frac12-\delta}\|\nabla\bphi\|_{\omvtx}
\lesssim  h_\vtx^{\frac12-\delta}\|\ROT\bv\|_{\omvtx},
\]
where the last bound follows again from~\eqref{relation 1} and $h_K\le h_\vtx$.
Combining this bound with~\eqref{eq:bnd_gradp_Sobolev} and invoking the triangle inequality concludes the proof.
\end{proof}

\section{Proof of the main results from Section~\ref{sec:main results}} \label{sec:proofs}

This section contains the proofs of Lemmas~\ref{lem:broken curl}, \ref{lem:L2 Poincare},
and~\ref{lem:L2 Sobolev}.

\subsection{Proof of Lemma~\ref{lem:broken curl}} \label{sec:proof_broken curl}

Let $\vtx\in\vertice$. Our goal is bound the error $\bdelta_{\vtx}:= \bE_{\vtx} - \psi_{\vtx} \bE_h$ defined in~\eqref{eq:encvtx} in the broken curl norm. Specifically, our goal is to prove that
\[
\|\bROT \bdelta_{\vtx} \|_{\omvtx} \le {}
C \bigg\{
 \! \sum_{F\in \Fv   } \!
\Big(\frac{h_F}{p}\Big) \ltwo{\jump{ \ROT   \bE_{h}}\upd_F }{F}^2
+\! \sum_{F\in \Fv } \!
\Big(\frac{p^2}{h_F}\Big) \ltwo{ \jump{\bE_h}\upc_F}{F}^2
\bigg\}^{\frac12}.
\]
To this purpose, we apply the Helmholtz decomposition \eqref{eq:helm curl} to $\bv:=\bROT \bdelta_{\vtx}$ in the vertex patch $\omvtx$. This
gives $\bpsi \in  \bXzv$ and $\xi\in X^{\rm g} (\omvtx)$ such that
$\bROT \bdelta_{\vtx}=  \ROTZ  \bpsi +\nabla \xi$ in $\omvtx$. Taking the $\bL^2(\omvtx)$-inner product with $\bROT \bdelta_{\vtx}$, we infer that
\begin{equation}\label{eq: nonconforming bound}
\ltwo{\bROT \bdelta_{\vtx}}{\omvtx}^2
=  ( \bROT \bdelta_{\vtx},  \ROTZ  \bpsi )_{\omvtx}
+( \bROT \bdelta_{\vtx},  \nabla \xi)_{\omvtx} =: \mbox{I} +  \mbox{II},
\end{equation}
and it remains to bound $\mbox{I}$ and $\mbox{II}$.

(1) Bound on $\mbox{I}$. Since $\bROT \bdelta_{\vtx} = \ROTZ \bE_{\vtx} - \bROT  (\psi_{\vtx} \bE_{h}) =  \ROTZ \bU_{\vtx} - \bROT  (\psi_{\vtx} \bE_{h})$ owing to the definition \eqref{def: vertex curl reconstruction} of $\bE_{\vtx}$, we have
\[
\mbox{I}= ( \ROTZ \bU_{\vtx} - \bROT  (\psi_{\vtx} \bE_{h}),  \ROTZ  \bpsi )_{\omvtx}.
\]
Recalling the definition \eqref{def: Patch reconstruction} of $\bU_{\vtx}$ and since
$\bpsi \in   \bXzv$, this gives
\[
\mbox{I}= ( \bLhpz(\psi_{\vtx} \bE_h) ,  \ROTZ  \bpsi )_{\omvtx}.
\]
Invoking the Cauchy--Schwarz inequality and the fact that $\psi_{\vtx}$ is continuous and bounded by one, we infer that
\begin{equation}\label{eq: curl bound 1}
\begin{split}
\mbox{I}\leq \ltwo{\bLhpz(\psi_{\vtx} \bE_h) }{\omvtx} \ltwo{\ROTZ \bpsi}{\omvtx}
&\lesssim \bigg\{
\! \sum_{F\in \Fv } \!
\Big(\frac{p^2}{h_F}\Big) \ltwo{ \jump{\psi_\vtx\bE_h}\upc_F}{F}^2
\bigg\}^{\frac12} \ltwo{\ROTZ\bpsi}{\omvtx}\\
&\lesssim \bigg\{
\! \sum_{F\in \Fv } \!
\Big(\frac{p^2}{h_F}\Big) \ltwo{ \jump{\bE_h}\upc_F}{F}^2
\bigg\}^{\frac12} \ltwo{\bROT \bdelta_{\vtx}}{\omvtx},
\end{split}
\end{equation}
where we used the stability bound~\eqref{eq: new HD stability} in the last estimate.

(2) Bound on $\mbox{II}$. Since $(\ROTZ \bE_{\vtx},\nabla \xi)_{\omvtx}=0$, we have
\[
\mbox{II} = -(\bROT(\psi_{\vtx}\bE_h) ,\nabla \xi)_{\omvtx}.
\]
Let $\Igv$ be the modified $hp$-Karkulik--Melenk interpolation operator on the vertex patch $\omvtx$ from Lemma~\ref{lemma:local_hp-KM}. We have
\[
\mbox{II} =
-( \bROT (\psi_{\vtx} \bE_h),  \nabla (\xi - \Igv(\xi)))_{\omvtx}
- ( \bROT (\psi_{\vtx} \bE_h),  \nabla \Igv(\xi))_{\omvtx} =: \mbox{II}_1 + \mbox{II}_2,
\]
and we bound the two terms on the right-hand side.

(2a) Bound on $\mbox{II}_1$.
We integrate by parts the gradient operator in all the cells composing $\meshv$.
This gives
\[
\mbox{II}_1
= -\suv (\xi - \Igv (\xi),\bn_K\SCAL(\bROT(\psi_{\vtx} \bE_h)))_{\partial K}.
\]
All the mesh
faces in $\bigcup_{K\in\meshv} \calFK$ are either in $\Fv$ or in $\Fma$. In the first
case, we observe that $\xi - \Igv  (\xi)$ is single-valued across every interface $F\in\Fv\cap \Fint$ and that we have set conventionally $\jump{\bullet}\upd_F:=\bullet|_F\SCAL\bn_\Dom$ on every boundary face $F\in\Fv\cap \Fb$.
In the second case where $F\in \Fma$, we have
$\bn_F{\cdot} \bROT(\psi_{\vtx} \bE_{h}) = \bn_F{\cdot} (\psi_\vtx {\bROT\bE_h})+ {\bE_h}{\cdot}(\bn_F {\times} \GRAD\psi_\vtx  ) = 0$ since $\psi_{\vtx} = 0$ and $(\n_F {\times} \nabla \psi_{\vtx} )_F = \bzero$ on $F$.
Altogether, this gives
\begin{align*}
\mbox{II}_1
= {}& - \sum_{F\in \Fv} (\xi - \Igv (\xi),  \jump{\bROT  (\psi_{\vtx} \bE_{h})}\upd_F )_{F} .
\end{align*}
We have $\bROT  (\psi_{\vtx} \bE_{h})
= \psi_{\vtx}\bROT \bE_{h} + (\nabla\psi_{\vtx}) \times \bE_{h}$
so that
$$\jump{\bROT\psi_{\vtx} \bE_{h}}\upd_F = \psi_\vtx\jump{\bROT\bE_h}\upd_F+\GRAD\psi_\vtx\SCAL\jump{\bE_h}\upc_F,
$$
since
$\psi_{\vtx}$ is continuous and $(\n_F {\times} \nabla \psi_{\vtx} )_F$ is single-valued across every interface $F\in\Fv \cap \Fint$. This gives
\begin{align*}
\mbox{II}_1
= {}& -\!\sum_{F\in \Fv} \!(\xi - \Igv (\xi), \psi_{\vtx}   \jump{\bROT \bE_{h}}\upd_F )_{F}
 - \!\sum_{F\in \Fv}\! (\xi - \Igv (\xi),\nabla \psi_{\vtx} \SCAL \jump{\bE_{h}}\upc_F )_{F}.
\end{align*}
Let $\mbox{II}_{11}$ and $\mbox{II}_{12}$ denote the two summations on the right-hand side.
Using the Cauchy--Schwarz inequality and $\|\psi_{\vtx}\|_{L^\infty(F)}=1$, we infer that
\begin{align*}
|\mbox{II}_{11}|
\leq {}&  \bigg\{ \! \sum_{F\in \Fv } \!
\Big(\frac{h_F}{p}\Big) \ltwo{\jump{\bROT \bE_{h}}\upd_F }{F}^2 \bigg\}^{\frac12}
 \bigg\{ \!  \sum_{F\in \Fv  } \!
\Big(\frac{p}{h_F}\Big)\ltwo{\xi - \Igv  (\xi)}{F}^2 \bigg\}^{\frac12}.
\end{align*}
We now invoke the approximation result \eqref{eq:inter_Ig on patch}
on $\Igv$. For every $F\in \Fv$, we can pick a mesh cell $K\in\meshv$ of which $F$ is a face and obtain
\[
\Big(\frac{p}{h_F}\Big)^{\frac12}\ltwo{\xi - \Igv (\xi)}{F}
\lesssim \|\nabla \xi\|_{K} \le \|\nabla \xi\|_{\omvtx} \le \ltwo{\bROT \bdelta_{\vtx}}{\omvtx},
\]
where we used~\eqref{eq: new HD stability} in the last bound.
This gives
\[
|\mbox{II}_{11}|
\lesssim \bigg\{ \! \sum_{F\in \Fv  } \!
\Big(\frac{h_F}{p}\Big) \ltwo{\jump{\bROT \bE_{h}}\upd_F }{F}^2
 \bigg\}^{\frac12}
\ltwo{\bROT \bdelta_{\vtx}}{\omvtx}.
\]
Moreover, using the Cauchy--Schwarz inequality and $\| \nabla \psi_{\vtx}\|_{L^\infty(F)}\lesssim h^{-1}_F$ gives
\begin{align*}
|\mbox{II}_{12}| \lesssim {}&
\bigg\{ \!\sum_{F\in \Fv   }\! \Big(\frac{h_F}{p}\Big) h_F^{-2}\ltwo{ \jump{ \bE_{h}}\upc_F }{F}^2
\bigg\}^{\frac12}
 \bigg\{ \!  \sum_{F\in \Fv  } \!
\Big(\frac{p}{h_F}\Big)\ltwo{\xi - \Igv  (\xi)}{F}^2 \bigg\}^{\frac12}.
\end{align*}
Invoking the same arguments as above to bound the factor involving $\xi$, we obtain
\begin{align*}
|\mbox{II}_{12}|\lesssim {}& \bigg\{ \!\sum_{F\in \Fv   }\! p^{-3}\Big(\frac{p^2}{h_F}\Big) \ltwo{ \jump{ \bE_{h}}\upc_F }{F}^2
\bigg\}^{\frac12} \ltwo{\bROT \bdelta_{\vtx}}{\omvtx}.
\end{align*}
Putting everything together, we conclude that
\begin{align*}
|\mbox{II}_1| \lesssim {}&  \bigg\{ \! \sum_{F\in \Fv } \!
\Big(\frac{h_F}{p}\Big) \ltwo{\jump{\bROT \bE_{h}}\upd_F }{F}^2
+ p^{-3} \Big(\frac{p^2}{h_F}\Big)
\ltwo{ \jump{ \bE_{h}}\upc_F }{F}^2  \bigg\}^{\frac12}
  \ltwo{\bROT \bdelta_{\vtx}}{\omvtx} .
\end{align*}

(2b) Bound on $\mbox{II}_2$.
This time we integrate by parts the curl operator in all the cells composing $\meshv$.
Using similar arguments as above, we obtain
\[
\mbox{II}_2
= \suv (\psi_\vtx \bE_{h},   \n_K \times \nabla \Igv(\xi)  )_{\dK}
= \sum_{F\in \Fv} ( \psi_{\vtx} \jump{  \bE_{h}}\upc_F,   \nabla \Igv(\xi) )_{F} .
\]
Invoking the Cauchy--Schwarz inequality,
the discrete trace inequality \eqref{eq: discrete_trace_inv}, and $\|\psi_{\vtx}\|_{L^\infty(F)}=1$,
we infer that
\begin{align*}
|\mbox{II}_2| \leq {}& \bigg\{\!\sum_{F\in \Fv} \! \Big(\frac{p^2}{h_K}\Big) \ltwo{ \jump{  \bE_{h}}\upc_F }{F}^2  \bigg\}^{\frac12}
\bigg\{ \!\sum_{F\in \Fv}  \Big( \frac{h_K}{p^2} \Big) \ltwo{ \nabla \Igv(\xi)}{F }^2  \bigg\}^{\frac12}  \\
\lesssim {}&\bigg\{\!\sum_{F\in \Fv} \! \Big( \frac{p^2}{h_K}\Big) \ltwo{ \jump{  \bE_{h}}\upc_F }{F}^2  \bigg\}^{\frac12}
\bigg\{ \!\suv\!  \ltwo{ \nabla \Igv(\xi)}{K }^2  \bigg\}^{\frac12} \\
\lesssim {}& \bigg\{\!\sum_{F\in \Fv} \! \Big(\frac{p^2}{h_K}\Big)\ltwo{ \jump{  \bE_{h}}\upc_F }{F}^2  \bigg\}^{\frac12}
 \ltwo{ \nabla \xi}{\omvtx},
\end{align*}
where the last bound follows from the $H^1$-stability of $\Igv$
(see~\eqref{eq:inter_Ig on patch}).
Proceeding as above to bound $\|\nabla \xi\|_{\omvtx}$, we conclude that
\[
|\mbox{II}_2| \lesssim \bigg\{\!\sum_{F\in \Fv} \! \Big(\frac{p^2}{h_K}\Big)\ltwo{ \jump{  \bE_{h}}\upc_F }{F}^2  \bigg\}^{\frac12} \ltwo{\bROT \bdelta_{\vtx}}{\omvtx} .
\]
(3) Putting the bounds on $\mbox{II}_1$ and $\mbox{II}_2$ together and since $p\ge1$, we conclude the proof. (Notice that we just drop the factor $p^{-3}$ in the bound on $\mbox{II}_1$.)

\subsection{Proof of Lemma~\ref{lem:L2 Poincare}}
\label{sec:proof_L2 Poincare}

We want to prove that
\[
\| \bdelta_{\vtx} \|_{\omvtx} \lesssim p \bigg\{
 \! \sum_{F\in \Fv   } \!
\Big( \frac{h_F}{p}\Big)^3 \ltwo{\jump{\ROTh \bE_h}\upd_F }{F}^2
+ h_F \ltwo{ \jump{\bE_h}\upc_F}{F}^2
\bigg\}^{\frac12}.
\]
We apply the local broken-curl, divergence-preserving Poincar\'e inequality to
$\bv:=\psi_{\vtx} \bE_h \in \bP\upb_{p+1}(\meshv)\subset \bH^1(\meshv)$ and select
$\bv\upc_\vtx:=\bE_{\vtx}$, which indeed satisfies $\bv\upc_\vtx\in \Hrotzv$
and $\bv\upc_\vtx-\bv=\bU_\vtx+\nabla\theta_\vtx - \psi_{\vtx} \bE_h\in \Hdivvz$
since $\bU_\vtx \in \Hdivvz$ by construction and $
\nabla\theta_\vtx - \psi_{\vtx} \bE_h\in \Hdivvz$ by definition of $\theta_\vtx$.
Observing that $\bdelta_{\vtx}= \bE_{\vtx} - \psi_{\vtx} \bE_h = \bv\upc_\vtx-\bv$, we infer that
\begin{align*}
\| \bdelta_{\vtx} \|_{\omvtx} & \lesssim h_\vtx \bigg\{ \|\bROT \bdelta_\vtx \|_{\omvtx}
+
\! \sum_{F\in \bFa}  \!
h_F^{-1} \ltwo{ \jump{\psi_\vtx\bE_h}\upc_F}{F}^2
\bigg\}^{\frac12} \\
&\le h_\vtx \bigg\{ \|\bROT \bdelta_\vtx \|_{\omvtx}
+
\! \sum_{F\in \Fv } \!
h_F^{-1} \ltwo{\jump{\bE_h}\upc_F}{F}^2
\bigg\}^{\frac12},
\end{align*}
where we used that $\psi_\vtx$ is a continuous function over $\omvtx$, vanishing on $\partial\omvtx$ and bounded by one.
Invoking~\eqref{eq:broken curl} to bound the first term on the right-hand side involving the broken curl and using the mesh shape-regularity gives
$$
\| \bdelta_{\vtx} \|_{\omvtx} \lesssim \bigg\{
 \! \sum_{F\in \Fv   } \!
\Big( \frac{h_F^3}{p}\Big) \ltwo{\jump{ \ROT   \bE_{h}}\upd_F }{F}^2
+
p^2 h_F \ltwo{ \jump{\bE_h}\upc_F}{F}^2
\bigg\}^{\frac12}.
$$
Factoring out $p^2$ completes the proof.

\begin{remark}[Definition of $\btheta_\vtx$]
The above proof highlights the relevance of the local problem~\eqref{def: Patch reconstruction curl free} defining the curl-free part of $\bE_\vtx$, i.e., $\nabla \theta_\vtx$.
\end{remark}

\subsection{Proof of Lemma~\ref{lem:L2 Sobolev}} \label{sec:proof_L2 Sobolev}

We want to prove that, provided Assumption~\ref{ass:Neumann} holds true,
\[
\| \bdelta_{\vtx} \|_{\omvtx} \lesssim \bigg\{
\! \sum_{F\in \Fv } \! h_F \ltwo{ \jump{\bE_h}\upc_F}{F}^2
\bigg\}^{\frac12}.
\]
We use the Aubin--Nitsche duality argument to bound $\bdelta_{\vtx}$ in the  $\bL^2$-norm on the vertex patch $\omvtx$. Specifically, we consider the following dual problem: Find $\bZ_{\vtx} \in \bXzv$ such that
\begin{subequations}
\label{eq_duality maxwell_strong}
\begin{alignat}{2}
\ROT (\ROTZ \bZ_{\vtx}) &= \bdelta_{\vtx} &\quad&\text{in $\omvtx$}, \label{eq:dual_local_PDE}
\\
\bZ_{\vtx} \CROSS \bn_{\omvtx} &= \bzero &\quad&\text{on $\partial \omvtx$}.
\end{alignat}
\end{subequations}
This problem is well-posed since $\omvtx$ has trivial topology and $\bdelta_{\vtx}$ is divergence-free. Indeed, $\DIV \bdelta_{\vtx} = \DIV (\bE_{\vtx} -  (\psi_{\vtx} \bE_{h})) =  \DIV \bU_{\vtx} + \DIV(\nabla \theta_{\vtx} -  \psi_{\vtx} \bE_{h}) = 0$ since $\bU_{\vtx}$ is divergence-free by construction and the definition \eqref{def: Patch reconstruction curl free} of $\theta_{\vtx}$ implies that $\nabla \theta_{\vtx} -  \psi_{\vtx} \bE_{h}$ is divergence-free as well.
Recalling the regularity pickup from \cite[Section 5.2]{Ciarlet16},  and \cite[Lemma 44.2]{EG_volII}, we infer that there is $\delta_\vtx' \in(0,\frac12]$ (here, $\delta_\vtx'$ can depend on $\vtx\in\vertice$) so that
$\ROTZ \bZ_{\vtx} \in \bH^{\frac12 + \delta_\vtx'}(\omvtx)$. This implies in particular that $\ROTZ \bZ_{\vtx}$ has meaningful (tangential) traces in $\bL^2(\partial K)$ for  all $K\in\meshv$. Taking the $\bL^2(\omvtx)$-inner product with $\bdelta_{\vtx}=\bU_{\vtx}+\nabla \theta_{\vtx}-\psi_{\vtx} \bE_{h}$ in \eqref{eq:dual_local_PDE}, we obtain
\begin{align*}
\ltwo{\bdelta_{\vtx}}{\omvtx}^2 &= ( \ROT (\ROTZ \bZ_{\vtx}) , \bdelta_{\vtx})_{\omvtx} \\
&=  (  \ROT (\ROTZ \bZ_{\vtx}) ,  \bU_{\vtx})_{\omvtx}
+ (\ROT (\ROTZ \bZ_{\vtx}),  \nabla \theta_{\vtx}  )_{\omvtx}
- (  \ROT (\ROTZ \bZ_{\vtx}) , \psi_{\vtx} \bE_{h})_{\omvtx}.
\end{align*}
Integrating by parts the curl operator, we infer that
\begin{align*}
\ltwo{\bdelta_{\vtx}}{\omvtx}^2 =& (  \ROTZ \bZ_{\vtx} ,  \ROTZ \bU_{\vtx})_{\omvtx}
- (   \ROTZ \bZ_{\vtx} , \bROT (\psi_{\vtx} \bE_{h}))_{\omvtx}\\
&- \sum_{K\in\meshv}(  \ROTZ \bZ_{\vtx} ,  (\psi_{\vtx} \bE_{h})\CROSS \bn_{K})_{\dK}.
\end{align*}
Re-arranging the last term on the right-hand side leads to
\[
\ltwo{\bdelta_{\vtx}}{\omvtx}^2 =  (  \ROTZ \bZ_{\vtx} ,  \bROT  (\bU_{\vtx} - (\psi_{\vtx} \bE_{h})))_{\omvtx} - \sum_{F\in \Fv} (  \ROTZ \bZ_{\vtx},\psi_{\vtx} \jump{  \bE_{h}}\upc_F)_F.
\]
Next, invoking the relation \eqref{def: Patch reconstruction} with $\bW_{\vtx}:= \bZ_{\vtx} \in \bXzv$ and the definition~\eqref{eq:def_Lh} of the lifting operator, we infer that
\begin{equation}\label{eq: error bound I}
\begin{split}
\ltwo{\bdelta_{\vtx}}{\omvtx}^2
& = ( \ROTZ \bZ_{\vtx} , \bLhpz(\psi_{\vtx} \bE_h))_{\omvtx}
- \sum_{F\in \Fv} (  \ROTZ \bZ_{\vtx},\psi_{\vtx} \jump{  \bE_{h}}\upc_F)_F\\
& = ( \bPi\upb_{\vtx} (\ROTZ \bZ_{\vtx}) , \bLhpz(\psi_{\vtx} \bE_h))_{\omvtx}
- \sum_{F\in \Fv} (  \ROTZ \bZ_{\vtx},\psi_{\vtx} \jump{  \bE_{h}}\upc_F)_F\\
& =
- \sum_{F\in \Fv} ( \avg{ \ROTZ \bZ_{\vtx} - \bPi\upb_{\vtx} (\ROTZ \bZ_{\vtx})}\upg_F ,\psi_{\vtx} \jump{  \bE_{h}}\upc_F)_F,
\end{split}
\end{equation}
where $\bPi\upb_{\vtx}$ denotes the $\bL^2$-orthogonal projection onto $\bP\upb_p(\meshv)$. Using the Cauchy--Schwarz inequality, $\|\psi_{\vtx}\|_{L^\infty(F)}=1$,
bounding the mean-values by the traces in the (one or) two cell(s) sharing $F\in\Fv$,
and invoking the mesh shape-regularity, we infer that
\begin{align*}
\ltwo{\bdelta_{\vtx}}{\omvtx}^2 &\!\le\!
\bigg\{ \!  \sum_{F\in \Fv  } \!
\Big(\frac{p}{h_F}\Big)^{2\delta} \!\ltwo{\avg{ \ROTZ \bZ_{\vtx} - \bPi\upb_{\vtx} (\ROTZ \bZ_{\vtx})}\upg_F}{F}^2 \bigg\}^{\frac12}\! \bigg\{ \!\!\sum_{F\in \Fv   }\!\! \Big(\frac{h_F}{p}\Big)^{2\delta} \!\ltwo{ \jump{ \bE_{h}}\upc_F }{F}^2 \bigg\}^{\frac12} \\
& \!\lesssim\! \bigg\{ \!\!  \sum_{K\in\meshv} \!\!
\Big(\frac{p}{h_K}\Big)^{2\delta} \ltwo{\ROTZ \bZ_{\vtx} - \bPi\upb_K (\ROTZ \bZ_{\vtx})}{\dK}^2 \bigg\}^{\frac12}
\bigg\{ \!\sum_{F\in \Fv   }\! \Big(\frac{h_F}{p}\Big)^{2\delta} \ltwo{ \jump{ \bE_{h}}\upc_F }{F}^2 \bigg\}^{\frac12},
\end{align*}
where $\delta\in(0,\frac12]$ is given by Assumption~\ref{ass:Neumann}.
Using the approximation result~\eqref{eq: L2 projection Polynomial approximation} on the $L^2$-projection followed by Lemma~\ref{lemma: regularity} (notice that indeed $\ROTZ\bZ_\vtx \in \bXdivzv$), we infer that, for all $K\in\meshv$,
\[
\Big(\frac{p}{h_K}\Big)^{\delta} \ltwo{\ROTZ \bZ_{\vtx} - \bPi\upb_K (\ROTZ \bZ_{\vtx})}{\dK}
\lesssim |\ROTZ \bZ_\vtx|_{\bH^{\frac12+\delta}(K)}
\lesssim h_{\vtx}^{\frac12 -\delta}  \ltwo{\bdelta_{\vtx} }{\omvtx}.
\]
Combining the above two bounds and since $h_{\vtx} \lesssim h_F$ for all $F\in \Fv$ and $p\ge 1$, we infer that the assertion holds true. (Notice that we just dropped the factor $p^{-2\delta}$ scaling the tangential jumps of $\bE_h$ since $\delta$ can be arbitrarily small.)

\section{Application to $hp$-a posteriori error analysis}
\label{sec: a posteriori error for dG}

In this section, we show how the $\Hrtz$-reconstruction operator constructed above can be used in residual-based $hp$-a posteriori error analysis. We focus on the symmetric interior penalty dG approximation of some simplified forms of Maxwell's equations involving the curl-curl operator and a zero-order term. Other nonconforming approximation methods can be considered as well. For simplicity, we assume that Assumption~\ref{ass:Neumann} holds true.

\subsection{Model problem}
\label{sec:model}

The model problem consists in finding $\bE \in \Hrotz$ such that
\begin{equation}
\label{eq_maxwell_strong}
\omega^2 \eps \bE+\ROT(\bnu\ROTZ \bE) = \bJ \quad\text{in $\Dom$},
\end{equation}
where $\bJ \in \Hdiv$ is the source term.
Both material properties $\eps$ and $\bnu$ can vary in $\Dom$ and
are assumed to take positive-definite, symmetric, piecewise constant matrix values
with eigenvalues uniformly bounded from above and below away from zero. Notice that, for simplicity, we are enforcing homogeneous Dirichlet boundary conditions (also called perfect electric conductor boundary conditions). The parameter $\omega$ scales as a frequency (reciprocal of a time scale) and can be either a real number (leading to the so-called definite Maxwell problem) or a pure imaginary number (leading to the so-called indefinite problem). Whenever $\omega=0$, the additional property $\DIV\bJ=0$ is prescribed. For the time being, we assume that $\omega$ is a nonzero real number and refer the reader to Remarks~\ref{rem:indefinite_apriori} and~\ref{rem:indefinite} for further insight into the other cases.

For a material property $\balpha\in\{\eps,\bnu\}$, we introduce
the inner product and associated norm weighted by $\balpha$,
leading to the notation $(\SCAL,\SCAL)_\balpha$ and $\|\SCAL\|_\balpha$.
The weak formulation of~\eqref{eq_maxwell_strong} classically amounts
to finding $\bE\in\Hrotz$ such that
\begin{equation} \label{eq:weak}
b(\bE,\bw) = (\bJ,\bw) \quad \forall \bw \in \Hrotz,
\end{equation}
with the bilinear form defined on $\Hrotz\times\Hrotz$ such that
\begin{equation}
b(\bv,\bw) \eqq  \omega^2(\bv,\bw)_\eps + (\ROTZ\bv,\ROTZ\bw)_{\bnu}.
\end{equation}

\subsection{Symmetric interior penalty dG discretization}
\label{sec:ipdg}

We employ the symmetric interior penalty dG method to discretize the weak formulation~\eqref{eq:weak} (see \cite{PeScM:02,HoPSS:05}). We assume that the mesh $\mesh$ is compatible with the domain partition on which $\eps$ and $\bnu$ are piecewise constant.
For $\balpha\in\{\eps,\bnu\}$, we set $\balpha_K \eqq \balpha|_K$ for all $K\in\mesh$, and denote by $\alpha_K^\flat$ and $\alpha_K^\sharp$ the smallest and largest eigenvalue of $\balpha_K$, respectively. It is convenient to set
\begin{equation}
\alpha_F^{\sharp}: = \max(\alpha_{K_l}^{\sharp},\alpha_{K_r}^{\sharp}) \; \forall
F := \partial K_l \cap \partial K_r \in \Fint,
\quad
\alpha_F^{\sharp}: = \alpha_{K_l}^{\sharp} \; \forall F := \dK_l \cap \partial {\Omega} \in \Fb.
\end{equation}
For all $K\in\mesh$, we also define $\alpha^\sharp_{\dK}$ so that
$\alpha^\sharp_{\dK}|_F := \alpha^\sharp_F$ for all $F\in \calFK$.
To avoid distracting technicalities, we assume that the material properties are only
mildly heterogeneous and anisotropic. Thus, we hide anisotropy ratios such as
$\alpha^\sharp_K/\alpha^\flat_K$ in the generic constants used in the error analysis.
We do the same for contrast factors such as $\alpha^\sharp_K/\alpha^\sharp_{K'}$, where
$K'$ lies in some neighborhood of $K$. We refer the reader to Remarks~\ref{rem:weights}
and~\ref{rem:hetero} for some further insight.

The discrete problem reads as follows: Find $\bE_h\in \Ppb$ (recall that $p\ge1$) such that
\begin{equation} \label{eq:disc_pb}
b_h(\bE_h,\bw_h)=(\bJ,\bw_h) \qquad \forall \bw_h\in \Ppb,
\end{equation}
with the discrete bilinear form such that, for all $\bv_h,\bw_h\in \Ppb$,
\begin{equation} \label{eq:IPDG} \begin{aligned}
b_h(\bv_h,\bw_h) := {}& \omega^2(\bv_h,\bw_h)_\eps + (\ROTh \bv_h,\ROTh\bw_h)_\bnu
+ \eta_* s_h(\bv_h,\bw_h) \\
& + \sum_{F\in\Fall} \big\{ (\avg{\bnu\ROTh\bv_h}\upg_F,\jump{\bw_h}\upc_F)_{F}
+ (\jump{\bv_h}\upc_F,\avg{\bnu\ROTh\bw_h}\upg_F)_{F}\big\}.
\end{aligned} \end{equation}
The stabilization bilinear form $s_h$ is defined as
\begin{equation} \label{eq:stab_IPDG}
s_h(\bv_h,\bw_h) \eqq \sum_{F\in\Fall} \nu_F^{\sharp} \Big( \frac{p^2}{h_F}\Big)
(\jump{\bv_h}\upc_F,\jump{\bw_h}\upc_F)_{F},
\end{equation}
and the user-dependent parameter $\eta_*>0$ is taken large enough (see below).
Using the definition~\eqref{eq:def_Lh} of the lifting operator $\bLhpz$, we infer that
\begin{equation} \label{eq:bh_lifting} \begin{aligned}
b_h(\bv_h,\bw_h) := {}& \omega^2(\bv_h,\bw_h)_\eps + (\ROTh \bv_h,\ROTh\bw_h)_\bnu
+ \eta_* s_h(\bv_h,\bw_h)  \\
& + (\bLhpz(\bw_h),\ROTh\bv_h)_\bnu
+  (\bLhpz(\bv_h),\ROTh\bw_h)_\bnu.
\end{aligned} \end{equation}

We equip the discrete space $\Ppb$ with the norm
\begin{equation} \label{eq:dGnorm}
\tnormdG{\bv_h}^2 \eqq \omega^2 \|\bv_h\|_\eps^2 + \| \ROTh \bv_h\|_\bnu^2 + s_h(\bv_h,\bv_h)
\quad \forall \bv_h\in\Ppb.
\end{equation}
Recalling~\eqref{eq:bnd_on_L}, we infer that
\begin{equation}\label{eq: bounds of Lifting}
\|\bLhpz(\bv_h)\|_\bnu^2 \leq C(\kappa_{\mesh}) s_h(\bv_h,\bv_h) \quad \forall \bv_h\in\Ppb,
\end{equation}
whence we infer, provided we take $\eta_* \geq \frac{1}{2}+2C(\kappa_{\mesh})$
with $C(\kappa_{\mesh})$ from~\eqref{eq: bounds of Lifting}, that the following coercivity property holds:
\begin{equation} \label{eq:coercivity}
b_h(\bv_h,\bv_h) \ge \frac12 \tnormdG{\bv_h}^2 \quad \forall \bv_h\in\Ppb.
\end{equation}

\begin{remark}[Weighted averages] \label{rem:weights}
Whenever the jumps of $\bnu$ across the mesh interfaces are large,
one can consider weighted ($\bnu$-dependent) averages to evaluate the
last two terms on the right-hand side of~\eqref{eq:IPDG}. In the case
of strong anisotropy, the weights depend on the normal-normal component of $\bnu$
on both sides of the interface. We refer the reader, e.g., to
\cite{ErStZ:09} for an example in the context of a scalar diffusion problems.
Accordingly, the definition~\eqref{eq:def_Lh} of the lifting operator $\bLhpz$ can be
modified by using a weighted average on the right-hand side.
\end{remark}

\begin{remark}[Indefinite case] \label{rem:indefinite_apriori}
In the indefinite case, the coercivity property \eqref{eq:coercivity} is replaced by a G{\aa}rding inequality. Assuming that $\omega$ is not a resonant frequency and invoking a duality argument \`a la Schatz then leads to a discrete inf-sup condition if the mesh size is small enough. We refer the reader to \cite[Sec.~5.4]{chaumontfrelet:hal-04589791} for more details.
\end{remark}

\subsection{Estimator and error measure}

The a posteriori error estimator is written as the sum over the mesh cells
of local error indicators $\eta_K$ for all $K\in\calT_h$. The local error indicator
consists of three pieces. The first two respectively measure the residuals of the
divergence constraint and of Maxwell's equations:
\begin{subequations}
\label{eq_def_eta}
\begin{equation}
\etadK^2
:=
\frac{1}{\epsilon^\sharp_K}
\bigg \{ \frac{1}{\omega^{2}}
\Big( \frac{h_K}{p} \Big)^2 \|\DIV (\bJ-\omega^2\eps\bE_h)\|_K^2
+ \omega^2
\Big(\frac{h_K}{p}\Big) \|\jump{\eps\bE_h}_{\partial K}\upd\|_{\partial K \setminus \partial \Omega}^2
\bigg \}, \\
\end{equation}
and
\begin{equation}
\begin{split}
\etacK^2
:={}&
\frac{1}{\nu^\sharp_K} \bigg \{
\Big( \frac{h_K}{p}\Big)^2 \|\bJ-\omega^2 \eps \bE_h-\ROT (\bnu \ROT\bE_h)\|_K^2\\
&+
\Big(\frac{h_K}{p}\Big) \|\jump{\bnu  \bROT \bE_h}_{\partial K}\upc\|_{\partial K \setminus \partial \Omega}^2
+ \nu_{\dK}^{\sharp} \Big(\frac{p^2}{h_K}\Big) \|\jump{\bE_h}_{\partial K}\upc\|_{\partial K}^2
\bigg \}.
\end{split}
\end{equation}
The last part of the estimator is related to the nonconformity of the discrete
field $\bE_h$ and reads
\begin{equation}
\label{eq_def_etajK}
\etajK^2
:= \nu_{\dK}^\sharp \Big(\frac{h_K}{p}\Big) \ltwo{\jumpK{ \bROT\bE_{h}}\upd }{\dK}^2
+
\bigg\{ {\omega^2 \epsilon_{\dK}^\sharp h_K} + \nu_{\dK}^\sharp\Big(\frac{p^2}{h_K}\Big)\bigg\} \ltwo{ \jumpK{\bE_h}\upc}{\dK}^2.
\end{equation}
\end{subequations}
Notice that this latter estimator is suitable under Assumption~\ref{ass:Neumann}.
It is convenient to introduce the following shorthand notation:
\begin{equation} \label{eq:def_eta}
\etaK^2 := \etadK^2+\etacK^2+\etajK^2,
\qquad
\eta_{\bullet}^2 := \sum_{K \in \calT_h} \eta_{\bullet,K}^2,
\qquad
\eta^2 := \sum_{K \in \calT_h} \etaK^2,
\end{equation}
with $\bullet\in\{\operatorname{div},\operatorname{curl},\operatorname{nc}\}$.

The approximation error is defined as
\begin{equation} \label{eq:def_bVsh}
\be:=\bE-\bE_h \in \bVsh \eqq \Hrotz + \bP\upb_p(\calT_h).
\end{equation}
Although the sum defining $\bVsh$ is not direct, any field $\bv_h\in \Hrotz\cap \Ppb$
satisfies $\jump{\bv_h}\upc_F=\bzero$ for all $F\in\Fall$. It is therefore legitimate
to extend the jump operator to $\bVsh$ by setting $\jump{\bv}\upc_F:=\jump{\bv_h}\upc_F$
for all $\bv=\tilde \bv+\bv_h \in \bVsh$. Similarly, the lifting operator $\bLhpz$ can be extended
to $\bVsh$ by setting $\bLhpz(\bv):=\bLhpz(\bv_h)$. The discrete bilinear form $b_h$
is then extended to $\bVsh\times\bVsh$ by using the expression~\eqref{eq:bh_lifting},
i.e., we set, for all $\bv,\bw\in \bVsh$,
\begin{equation} \label{eq:IPDG extended} \begin{aligned}
\bs(\bv,\bw) := {}& \omega^2(\bv,\bw)_\eps + (\ROTh \bv,\ROTh\bw)_\bnu
+ \eta_* s_h(\bv,\bw)  \\
& + (\bLhpz(\bw),\ROTh\bv)_\bnu
+  (\bLhpz(\bv),\ROTh\bw)_\bnu,
\end{aligned} \end{equation}
keeping in mind that $\ROTh\bv:=\ROTZ\bv$ whenever $\bv\in\Hrotz$.

For any subset $\calT\subset \mesh$, we define the error measure
\begin{equation} \label{eq:error_meas}
\tnorm{\be}_{\sharp,\calT}^2
:= \sum_{K \in \calT}
\bigg \{
\omega^2 \|\be\|_{\eps,K}^2
+
\|\ROT \be\|_{\bnu,K}^2 + \nu_{\dK}^{\sharp}
\Big( \frac{p^2}{h_K} \Big) \|\jump{\be}_{\partial K}\upc\|_{\partial K}^2
\bigg \},
\end{equation}
and omit the subscript $\calT$ whenever $\calT = \mesh$.
When restricted to $\Ppb$, $\tnorm{\bullet}_{\sharp}$ is equivalent to the discrete
dG norm $\tnormdG{\bullet}$ defined in~\eqref{eq:dGnorm} up to a rewriting of the
jump contribution as a sum over the mesh cell boundaries rather than over the mesh faces.

\begin{remark}[Stabilization weight]
We observe that the error measure defined in~\eqref{eq:error_meas} is independent
 of the stabilization parameter $\eta_*$ introduced in the dG scheme.
\end{remark}

\begin{remark}[Heterogeneous materials] \label{rem:hetero}
Whenever one wishes to account for the heterogeneity of the material properties,
some weights in the error indicators need to be adapted. For the error indicators
defined in~\eqref{eq_def_eta}, one needs to consider a contrast factor involving
three layers of vertex-based neighbors around $K$, as detailed in
\cite{chaumontfrelet:hal-04589791}. For the nonconformity indicator defined in
\eqref{eq_def_etajK}, one layer of neighbors is sufficient. We also notice that
it is possible to modify Definition~\ref{def: Patchwise and global potential reconstruction}
so as to reconstruct on each vertex patch an $\Hrtz$-conforming field by using
weighted $L^2$-inner products. However, the estimates satisfied by this modified
reconstruction in the broken curl and $\bL^2$-norms will depend anyway on the contrast
factors associated with the underlying weights.
\end{remark}

\subsection{$hp$-a posteriori error analysis}

We decompose the approximation error into two components as follows:
\begin{equation}\label{def: error splitting}
\be=(\bE-\bE_c)+(\bE_c- \bE_h)=:\be_c+\be_{nc}, \qquad \bE_c:=\opRec(\bE_h) \in \Hrotz.
\end{equation}
We call $\be_c$ the conforming error and $\be_{nc}$ the nonconforming error.
Notice that we use the $\Hrtz$-reconstruction operator from Section~\ref{sec:main results}
to define the two error components. Actually, the precise definition of $\bE_c$ is
irrelevant for bounding $\be_c$ (we only use that $\be_c\in \Hrotz$), and is only relevant
for bounding $\be_{nc}$.

\begin{lemma}[Residual]
\label{lemma_pde_residual}
Let $\etadc^2 := \etad^2+\etac^2$. The following holds:
\begin{equation}
\label{eq_residual_Hcurl}
|\bs(\be,\be_c)|
\leq
C(\kappa_{\calT_h}) \etadc \tnorms{\be_c}.
\end{equation}
\end{lemma}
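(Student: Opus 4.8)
The plan is to rewrite $\bs(\be,\be_c)$ as a residual functional and then test it against a Helmholtz-type splitting of $\be_c$, so that the divergence estimator $\etad$ comes from a gradient part and the curl estimator $\etac$ from a divergence-free part. First I would exploit that $\be_c\in\Hrotz$ is conforming, so $\jump{\be_c}\upc_F=\bzero$ for all $F\in\Fall$ and $\bLhpz(\be_c)=\bzero$; hence in~\eqref{eq:IPDG extended} the stabilization term $s_h(\be,\be_c)$ and the lifting term $(\bLhpz(\be_c),\ROTh\be)_\bnu$ vanish. Writing $\be=\bE-\bE_h$ and invoking the weak formulation~\eqref{eq:weak} with test function $\be_c$ (together with $\bLhpz(\bE)=\bzero$ and $\ROTh\bE=\ROTZ\bE$), I arrive at
\[
\bs(\be,\be_c)=(\bJ-\omega^2\eps\bE_h,\be_c)-(\ROTh\bE_h,\ROTZ\be_c)_\bnu-(\bLhpz(\bE_h),\ROTZ\be_c)_\bnu=:\mathcal{R}(\be_c).
\]
A companion computation using the discrete problem~\eqref{eq:disc_pb} and the conformity of any $\bv_h\in\Ppb\cap\Hrotz$ (which again annihilates $s_h$ and $\bLhpz(\bv_h)$) yields the Galerkin orthogonality $\mathcal{R}(\bv_h)=0$; this is what will generate the small factors $h_K/p$.

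Next I would split $\be_c=\nabla\phi+\bz$ through a stable regular Helmholtz decomposition, with $\phi\in H^1_0(\Dom)$ and $\bz\in\bH^1(\Dom)\cap\Hrotz$ satisfying $\ROTZ\bz=\ROTZ\be_c$, the $\eps$-orthogonality bound $\|\nabla\phi\|_\eps\le\|\be_c\|_\eps$, and $\|\nabla\bz\|\lesssim\|\ROTZ\be_c\|$. By linearity $\mathcal{R}(\be_c)=\mathcal{R}(\nabla\phi)+\mathcal{R}(\bz)$, and since $\ROTZ\nabla\phi=\bzero$ and $\bLhpz(\nabla\phi)=\bzero$, the gradient part reduces to $\mathcal{R}(\nabla\phi)=(\bJ-\omega^2\eps\bE_h,\nabla\phi)$. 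Here I would let $\bI_h^{\rm g}\phi$ be a global $hp$-stable, $H^1_0$-conforming interpolant of Karkulik--Melenk type (whose cellwise estimates are those of Lemma~\ref{lemma:local_hp-KM} in their global form); then $\nabla\bI_h^{\rm g}\phi\in\Ppb\cap\Hrotz$, so orthogonality gives $\mathcal{R}(\nabla\phi)=(\bJ-\omega^2\eps\bE_h,\nabla(\phi-\bI_h^{\rm g}\phi))$. Integrating by parts cellwise exposes the volume residual $\DIV(\bJ-\omega^2\eps\bE_h)$ and, since $\bJ\in\Hdiv$ has no normal jump, the interface contributions $\omega^2\jump{\eps\bE_h}\upd_F$ (boundary faces vanish because $\phi$ does there). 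Cauchy--Schwarz together with the $L^2$-cell and $L^2$-face approximation bounds for $\phi-\bI_h^{\rm g}\phi$ then delivers $|\mathcal{R}(\nabla\phi)|\lesssim\etad\,\omega\|\be_c\|_\eps$, precisely matching the weights in $\etadK$.

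For the regular part I would use a conforming $hp$-quasi-interpolant $\bz_h\in\Ppb\cap\Hrotz$ of $\bz$ and orthogonality, $\mathcal{R}(\bz)=\mathcal{R}(\bz-\bz_h)$. Integrating by parts the term $(\ROTh\bE_h,\ROTZ(\bz-\bz_h))_\bnu$ cellwise produces the Maxwell element residual $\bJ-\omega^2\eps\bE_h-\ROT(\bnu\ROT\bE_h)$ and the tangential jumps $\jump{\bnu\ROT\bE_h}\upc_F$ on the interfaces (boundary faces again vanish). The lifting contribution $(\bLhpz(\bE_h),\ROTZ(\bz-\bz_h))_\bnu$ is handled directly by~\eqref{eq: bounds of Lifting}, which bounds $\|\bLhpz(\bE_h)\|_\bnu$ by the jump part $\nu^\sharp_{\dK}(p^2/h_K)\|\jump{\bE_h}\upc\|^2$ of $\etac$, paired with $\|\ROTZ(\bz-\bz_h)\|_\bnu\lesssim\|\ROTZ\be_c\|_\bnu$ from the curl-stability of the quasi-interpolant. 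Using $\|\bz-\bz_h\|_K\lesssim(h_K/p)\|\nabla\bz\|$ and $\|\nabla\bz\|\lesssim\|\ROTZ\be_c\|$, Cauchy--Schwarz gives $|\mathcal{R}(\bz)|\lesssim\etac\,\|\ROTZ\be_c\|_\bnu$. Combining the two parts and recalling that the jump contribution to $\tnorms{\be_c}$ vanishes by conformity, so that $\omega\|\be_c\|_\eps+\|\ROTZ\be_c\|_\bnu\lesssim\tnorms{\be_c}$, concludes the proof.

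The step I expect to be hardest is the second one: securing a stable regular Helmholtz decomposition of $\be_c$ together with an $\Hrotz$-conforming quasi-interpolant whose approximation and curl-stability estimates are $p$-robust (and, ideally, free of the domain topology). This is exactly the delicate $hp$-interpolation theory in $\Hrt$; the remaining ingredients are cellwise integration by parts, the $hp$-approximation bounds already recorded in Lemma~\ref{lemma:local_hp-KM}, and the lifting bound~\eqref{eq: bounds of Lifting}.
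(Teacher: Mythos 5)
Your proposal reconstructs, in essentially complete form, the argument that the paper itself only cites: the proof of Lemma~\ref{lemma_pde_residual} is delegated verbatim to \cite[Lem.~6.2]{chaumontfrelet:hal-04589791}, and that reference proceeds exactly as you do --- rewrite $\bs(\be,\be_c)$ as a residual using the conformity of $\be_c$ and the weak form, invoke Galerkin orthogonality against $\Ppb\cap\Hrotz$, split the test function by a Helmholtz-type decomposition, and pair the gradient part with $\etad$ and the regular part with $\etac$ via $hp$-quasi-interpolation, cellwise integration by parts, and the lifting bound~\eqref{eq: bounds of Lifting}. The algebra of the residual identity, the orthogonality, and the matching of weights against $\etadK$ and $\etacK$ are all correct.

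The one step that does not survive scrutiny as stated is the decomposition $\be_c=\nabla\phi+\boldsymbol{z}$ with \emph{both} $\|\nabla\phi\|_\eps\le\|\be_c\|_\eps$ \emph{and} $\boldsymbol{z}\in\bH^1(\Dom)$ with $\|\nabla\boldsymbol{z}\|\lesssim\|\ROTZ\be_c\|$. The first property forces $\phi$ to be the $\eps$-weighted $H^1_0$-projection, i.e., $\DIV(\eps\boldsymbol{z})=0$ with vanishing tangential trace and prescribed curl; on a nonconvex Lipschitz polyhedron with piecewise-constant $\eps$ such a field has only $\bH^{\frac12+\delta}$ regularity in general, so it is not the regular potential. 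Conversely, the regular decomposition that delivers $\boldsymbol{z}\in\bH^1$ with $\|\nabla\boldsymbol{z}\|\lesssim\|\ROTZ\be_c\|$ only yields $\|\nabla\phi\|\le\|\be_c\|+\|\boldsymbol{z}\|\lesssim\|\be_c\|+\diam(\Dom)\,\|\ROTZ\be_c\|$, and after multiplication by $\omega$ the second term introduces a factor of order $\omega\,\diam(\Dom)\,(\eps^\sharp/\nu^\flat)^{1/2}$ that is not controlled by $\kappa_{\calT_h}$. The standard repair is either a three-term decomposition (with the extra gradient again absorbed by the divergence residual, at the price of this frequency/domain-dependent constant) or a commuting $\Hrotz$-conforming quasi-interpolation operator; this, together with the treatment of harmonic Dirichlet fields when $\Dom$ is multiply connected or $\front$ is disconnected (which the paper explicitly allows), is precisely what \cite{chaumontfrelet:hal-04589791} supplies and what your sketch leaves open. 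You correctly identify this as the hardest step, but it is a genuine gap rather than a technicality: without it the constant in~\eqref{eq_residual_Hcurl} cannot be claimed to depend on $\kappa_{\calT_h}$ alone.
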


\begin{proof}
The proof essentially follows the arguments from the proof of \cite[Lem.~6.2]{chaumontfrelet:hal-04589791}, up to minor adaptations (the proof therein tracks the dependency of the constants on
the heterogeneity of the material properties, and deals with the indefinite Maxwell problem).
\end{proof}

\begin{lemma}[Bound on $\be_c$]
\label{conforming error in energy norm}
The following holds:
\begin{equation}
\label{eq:comforming error in energy norm}
\tnorms{\be_{c}}
\leq
C(\kappa_{\calT_h}) (\etadc+\tnorms{\be_{nc}}).
\end{equation}
\end{lemma}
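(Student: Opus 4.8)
The plan is to exploit that $\be_c=\bE-\bE_c$ belongs to $\Hrotz$ (both $\bE$ and $\bE_c=\opRec(\bE_h)$ are conforming), so that the extended bilinear form $\bs$ from~\eqref{eq:IPDG extended} collapses to a coercive, conforming form when tested against $\be_c$. First I would record that, since $\be_c\in\Hrotz$, all its tangential jumps vanish across the mesh faces; hence $s_h(\be_c,\cdot)=0$ and $\bLhpz(\be_c)=\bzero$. Using that $\ROTh\be_c=\ROTZ\be_c$ for conforming fields, this yields the exact identity
\begin{equation*}
\bs(\be_c,\be_c) = \omega^2\|\be_c\|_\eps^2 + \|\ROTZ\be_c\|_\bnu^2 = \tnorms{\be_c}^2,
\end{equation*}
where the last equality holds because the jump contribution in the error measure~\eqref{eq:error_meas} also vanishes for $\be_c$.

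Next, using the bilinearity of $\bs$ together with the splitting $\be=\be_c+\be_{nc}$ from~\eqref{def: error splitting}, I would write
\begin{equation*}
\tnorms{\be_c}^2 = \bs(\be_c,\be_c) = \bs(\be,\be_c) - \bs(\be_{nc},\be_c).
\end{equation*}
The first term on the right is controlled directly by the residual bound~\eqref{eq_residual_Hcurl} of Lemma~\ref{lemma_pde_residual}, giving $|\bs(\be,\be_c)|\le C(\kappa_{\calT_h})\,\etadc\,\tnorms{\be_c}$. It therefore remains to bound the cross term $\bs(\be_{nc},\be_c)$ by $C(\kappa_{\calT_h})\,\tnorms{\be_{nc}}\,\tnorms{\be_c}$, after which dividing by $\tnorms{\be_c}$ (the case $\tnorms{\be_c}=0$ being trivial) will conclude.

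For the cross term I would again invoke $\bLhpz(\be_c)=\bzero$ and $s_h(\be_c,\cdot)=0$ to reduce $\bs(\be_{nc},\be_c)$ to the three surviving contributions $\omega^2(\be_{nc},\be_c)_\eps$, $(\ROTh\be_{nc},\ROTh\be_c)_\bnu$, and $(\bLhpz(\be_{nc}),\ROTh\be_c)_\bnu$, and estimate each by the Cauchy--Schwarz inequality. The first two are immediately bounded by $\tnorms{\be_{nc}}\,\tnorms{\be_c}$. The third (mixed lifting) term is the main technical point: I would bound $\|\bLhpz(\be_{nc})\|_\bnu$ via~\eqref{eq: bounds of Lifting}, i.e.\ $\|\bLhpz(\be_{nc})\|_\bnu^2\le C(\kappa_{\calT_h})\,s_h(\be_{nc},\be_{nc})$, and then absorb $s_h(\be_{nc},\be_{nc})\lesssim\tnorms{\be_{nc}}^2$ using the equivalence (up to rewriting face sums as cell-boundary sums) between the stabilization seminorm and the jump part of~\eqref{eq:error_meas}; here one uses that the jumps of $\be_{nc}=\bE_c-\bE_h$ coincide with those of $-\bE_h$, so the lifting is well defined on $\bVsh$. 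Combining the three estimates yields $|\bs(\be_{nc},\be_c)|\le C(\kappa_{\calT_h})\,\tnorms{\be_{nc}}\,\tnorms{\be_c}$, and putting everything together gives $\tnorms{\be_c}^2\le C(\kappa_{\calT_h})(\etadc+\tnorms{\be_{nc}})\tnorms{\be_c}$, from which the claim follows. The only real obstacle is the bookkeeping ensuring the mixed lifting term is absorbed by $\tnorms{\be_{nc}}$; the rest is a direct consequence of bilinearity and the residual lemma.
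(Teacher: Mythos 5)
Your proposal is correct and follows essentially the same route as the paper's proof: the identity $\tnorms{\be_c}^2=\bs(\be_c,\be_c)=\bs(\be,\be_c)-\bs(\be_{nc},\be_c)$, the residual bound of Lemma~\ref{lemma_pde_residual} for the first term, and Cauchy--Schwarz plus the lifting bound~\eqref{eq: bounds of Lifting} (with $\bLhpz(\be_{nc})=-\bLhpz(\bE_h)$ and $s_h(\be_{nc},\be_{nc})\lesssim\tnorms{\be_{nc}}^2$) for the cross term. No gaps.
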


\begin{proof}
Since $\be_c\in\Hrotz$, we have $s_h(\be_c,\be_c)=0$ and $\bLhpz(\be_c)=\bzero$, so that
\begin{align*}
\tnorms{\be_c}^2 &= \omega^2 \|\be_c\|_\eps^2 + \|\ROTZ \be_c\|_\bnu^2 \\
&= \bs(\be_c, \be_c) = \bs(\be, \be_c) - \bs(\be_{nc}, \be_c) =: T_1 + T_2.
\end{align*}
Applying the bound~\eqref{eq_residual_Hcurl} from Lemma~\ref{lemma_pde_residual}, we get
\begin{equation*}
|T_1| \lesssim \etadc \tnorms{\be_c}.
\end{equation*}
Moreover, we observe that
\[
T_2 = \bs(\be_{nc},\be_c) = \omega^2(\be_{nc},\be_c)_\eps + (\ROTh \be_{nc},\ROTZ\be_c)_\bnu
- (\bLhpz(\bE_h),\ROTZ\be_c)_{\bnu}.
\]
Invoking the Cauchy--Schwarz inequality and the bound~\eqref{eq:bnd_on_L} on the discrete lifting operator, we infer that
\[
|T_2| \leq 2 \Big\{\omega^2\|\be_{nc}\|_\eps^2 + \|\ROTh\be_{nc}\|_\bnu^2 +\|\bLhpz(\bE_h)\|_\bnu^2 \Big\}^{\frac12} \tnorms{\be_{c}} \lesssim \tnorms{\be_{nc}} \tnorms{\be_{c}}.
\]
Combining the above bounds completes the proof.
\end{proof}

\begin{lemma}[Bound on $\be_{nc}$]
\label{nonconforming error in energy norm}
Under Assumption~\ref{ass:Neumann}, the following holds:
\begin{equation}
\label{eq:noncomforming error in energy norm}
\tnorms{\be_{nc}}
\leq
C(\kappa_{\calT_h}) \etaj.
\end{equation}
\end{lemma}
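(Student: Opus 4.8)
The plan is to expand $\tnorms{\be_{nc}}^2$ into its three defining contributions in~\eqref{eq:error_meas} and to bound each by the corresponding piece of $\etaj^2$, exploiting the locality of the reconstruction. The starting point is the observation already used in the proof of Theorem~\ref{theorem: nonconforming error total}: by the partition-of-unity property~\eqref{eq:PU}, one has $(\be_{nc})|_K = \sum_{\vtx\in\verticeK}\bdelta_\vtx|_K$ on every $K\in\mesh$. Hence each component of the error measure can be expanded, via the triangle inequality and the finite (shape-regularity-controlled) cell–vertex overlap, into a sum of patchwise contributions to which the local bounds of Section~\ref{sec:main results} apply.

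First, the tangential-jump contribution is immediate. Since $\bE_c=\opRec(\bE_h)\in\Hrotz$ has vanishing tangential jumps, we have $\jump{\be_{nc}}\upc_F=-\jump{\bE_h}\upc_F$ for every $F\in\Fall$, so the jump term in $\tnorms{\be_{nc}}^2$ coincides exactly with the $\nu_{\dK}^\sharp(p^2/h_K)$ contribution in $\etajK^2$ summed over the mesh.

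Second, I would bound the $\bnu$-weighted curl term. Writing $\|\ROT\be_{nc}\|_{\bnu,K}^2\lesssim \nu_K^\sharp\|\bROT\be_{nc}\|_K^2$, I expand into $\sum_{\vtx\in\verticeK}\|\bROT\bdelta_\vtx\|_K^2$, re-organize the global sum over cells into a sum over vertex patches (using $\sum_{K\in\meshv}\|\bROT\bdelta_\vtx\|_K^2=\|\bROT\bdelta_\vtx\|_{\omvtx}^2$), apply the local bound~\eqref{eq:broken curl} from Lemma~\ref{lem:broken curl} on each $\omvtx$, and finally re-organize the resulting sum over the faces in $\Fv$ back into a sum over cell boundaries. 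This produces the normal-jump-of-curl and the $\nu_{\dK}^\sharp(p^2/h_K)$ tangential-jump contributions of $\etaj^2$. The $\omega^2\eps$-weighted $\bL^2$ term is treated identically, but using the improved bound~\eqref{eq:L2 Sobolev} of Lemma~\ref{lem:L2 Sobolev} (valid under Assumption~\ref{ass:Neumann}); it yields the remaining $\omega^2\eps_{\dK}^\sharp h_K$ tangential-jump contribution. Summing the three estimates gives~\eqref{eq:noncomforming error in energy norm}.

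The main obstacle is the bookkeeping of the material weights, since Lemmas~\ref{lem:broken curl} and~\ref{lem:L2 Sobolev} are stated in unweighted norms whereas $\etaj$ carries the explicit weights $\nu_{\dK}^\sharp$ and $\eps_{\dK}^\sharp$. The mild-heterogeneity assumption is what makes the transfer work: it allows treating $\bnu$ and $\eps$ as essentially constant on each vertex patch, so that $\max_{K\in\meshv}\nu_K^\sharp$ (resp.\ $\max_{K\in\meshv}\eps_K^\sharp$) can be pulled out of the patchwise estimate and re-distributed onto the neighboring faces, all contrast factors being absorbed into $C(\kappa_{\mesh})$. Care is also needed so that the finite-overlap constants arising when passing between cell, patch, and face summations depend only on the mesh shape-regularity.
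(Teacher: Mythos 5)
Your proposal is correct and follows essentially the same route as the paper's proof: decompose $\be_{nc}|_K=\sum_{\vtx\in\verticeK}\bdelta_\vtx|_K$ via the partition of unity, bound the curl term with Lemma~\ref{lem:broken curl} and the $\omega^2\eps$-weighted $\bL^2$ term with Lemma~\ref{lem:L2 Sobolev}, handle the tangential-jump term through $\jumpK{\be_{nc}}\upc=-\jumpK{\bE_h}\upc$, and absorb the material-weight contrast factors into $C(\kappa_{\calT_h})$ using the mild-heterogeneity assumption.
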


\begin{proof}
Recall that, for all $K\in\mesh$,
$\be_{nc}|_K = \sum_{\vtx \in \verticeK} \bdelta_{\vtx}|_K$ with
$\bdelta_{\vtx}:= \bE_{\vtx} - \psi_{\vtx} \bE_h$ (see~\eqref{eq:encvtx}).
Reasoning as in the proof of
Theorem~\ref{theorem: nonconforming error total}, this gives
\[
\|\ROT \be_{nc}\|_{\bnu,K}^2 \le \nu_K \sum_{\vtx\in \verticeK} \|\ROTh \bdelta_\vtx\|_{\omvtx}^2.
\]
Invoking the bound on $\|\ROTh \bdelta_\vtx\|_{\omvtx}$
from Lemma~\ref{lem:broken curl},
and hiding the contrast factor on $\nu$ in the generic constants, we infer that
\[
\|\ROT \be_{nc}\|_{\bnu,K}^2 \lesssim \sum_{\vtx\in \verticeK} \sum_{F\in \Fv   } \!
 \bigg\{ \nu^\sharp_F \Big(\frac{h_F}{p}\Big) \ltwo{\jump{ \bROT \bE_{h}}\upd_F }{F}^2
+ \nu^\sharp_F \Big(\frac{  p^2}{h_F} \Big) \ltwo{ \jump{\bE_h}\upc_F}{F}^2 \bigg\}.
\]
Reasoning similarly to bound $\omega^2\|\be_{nc}\|_{\eps,K}^2$, but this time invoking the bound from Lemma~\ref{lem:L2 Sobolev}, we infer that
\[
\omega^2\|\be_{nc}\|_{\eps,K}^2 \lesssim \sum_{\vtx\in \verticeK}
\sum_{F\in \Fv } \! \omega^2 \epsilon^\sharp_F h_F \ltwo{ \jump{\bE_h}\upc_F}{F}^2.
\]
Combining the above two bounds together, re-arranging the summations over the mesh cells,
invoking again the mild heterogeneity of the material properties, and observing
that $\jumpK{\be_{nc}}\upc=-\jumpK{\bE_h}\upc$ for all $K\in\mesh$, leads to the expected
bound on $\tnorms{\be_{nc}}$.
\end{proof}

\begin{theorem}[Global upper bound (reliability)] \label{th:reliability}
Under Assumption~\ref{ass:Neumann}, the following holds:
\begin{equation}
\label{eq_reliability}
\tnorms{\be}
\leq
C(\kappa_{\calT_h})
\eta.
\end{equation}
\end{theorem}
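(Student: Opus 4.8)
The plan is to assemble the three preceding lemmas, the only subtlety being the order in which the two components of the error are controlled. The starting point is the splitting $\be=\be_c+\be_{nc}$ from~\eqref{def: error splitting}, together with the triangle inequality for the error measure $\tnorms{\cdot}$, which is a genuine seminorm in its argument since it is a weighted sum of $\bL^2$-type contributions; this yields $\tnorms{\be}\le \tnorms{\be_c}+\tnorms{\be_{nc}}$.

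The key point is that the two bounds at my disposal are not symmetric: Lemma~\ref{conforming error in energy norm} controls $\tnorms{\be_c}$ in terms of $\etadc$ and $\tnorms{\be_{nc}}$, whereas Lemma~\ref{nonconforming error in energy norm} controls $\tnorms{\be_{nc}}$ directly by $\etaj$, with no back-reference to the conforming error. There is thus no circular dependency, and I would invoke the lemmas in the right order: first Lemma~\ref{nonconforming error in energy norm} (which is where Assumption~\ref{ass:Neumann} enters, through the $\bL^2$-reconstruction estimate of Lemma~\ref{lem:L2 Sobolev}) to get $\tnorms{\be_{nc}}\lesssim \etaj$, and then Lemma~\ref{conforming error in energy norm} to deduce $\tnorms{\be_c}\lesssim \etadc+\etaj$. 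Substituting both into the triangle inequality gives $\tnorms{\be}\lesssim \etadc+\etaj$.

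It then remains to recast the right-hand side in terms of the global estimator. Recalling the shorthand~\eqref{eq:def_eta}, one has $\eta^2=\etad^2+\etac^2+\etaj^2=\etadc^2+\etaj^2$, so that $\etadc\le\eta$ and $\etaj\le\eta$, whence $\etadc+\etaj\le\sqrt{2}\,\eta$. This yields $\tnorms{\be}\le C(\kappa_{\calT_h})\eta$ and finishes the argument. I do not expect any genuine obstacle in this final step: all of the analytical work has already been absorbed into the three lemmas, and the only thing requiring a moment's care is to break the apparent coupling between $\be_c$ and $\be_{nc}$ by applying the nonconforming bound before the conforming one.
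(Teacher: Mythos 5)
Your proposal is correct and follows exactly the paper's argument: triangle inequality on the splitting $\be=\be_c+\be_{nc}$, Lemma~\ref{nonconforming error in energy norm} to bound $\tnorms{\be_{nc}}$ by $\etaj$, Lemma~\ref{conforming error in energy norm} to then bound $\tnorms{\be_c}$, and the definition of $\eta$ in~\eqref{eq:def_eta} to conclude. Your remark about the non-circularity of the two lemmas is a correct (and worthwhile) clarification of the one-line proof given in the paper.
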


\begin{proof}
Invoke the triangle inequality together with Lemmas~\ref{conforming error in energy norm} and~\ref{nonconforming error in energy norm}, and recall the definition of $\eta$ from~\eqref{eq:def_eta}.
\end{proof}

Finally, we derive local efficiency estimates.

\begin{proposition}[Local lower bound (local efficiency)] \label{prop:efficiency}
For all $K \in \calT_h$, we have
\begin{equation}
\eta_K
\leq
C(\kappa_{\calT_h}) p^{\frac32}\left \{
\bigg (
1 + \Big(\frac{\omega^2 \epsilon_K^\sharp}{\nu_K^\sharp}\Big)^{\frac12} \Big(\frac{h_K}{p}\Big)
\bigg )
\tnorm{\bE-\bE_h}_{\sharp,\Kupf}
+
\operatorname{osc}_{\Kupf}
\right \},
\end{equation}
with the data oscillation term
\begin{equation} \label{eq:data_osc}
\operatorname{osc}_{\Kupf}^2
\!:=\!\!\!
\sum_{K' \in \Kupf}
\min_{\bJ_{h} \in \Ppb} \!\!
\left \{
\frac{1}{\nu_{K'}^\sharp} \Big(\frac{h_{K'}}{p} \Big)^2 \|\bJ-\bJ_{h}\|_{K'}^2
\! + \!
\frac{1}{\omega^2 \epsilon_{K'}^\sharp} \Big(\frac{h_{K'}}{p}\Big)^2 \|\DIV (\bJ-\bJ_{h})\|_{K'}^2
\right \},
\end{equation}
and $\Kupf$ is the collection of the mesh cells sharing a face with $K$.
\end{proposition}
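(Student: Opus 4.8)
The plan is to prove the efficiency contribution by contribution, separating the \emph{jump} parts of $\eta_K$, which are controlled directly by the error measure, from the \emph{residual} parts, which require a localization argument based on $hp$-bubble functions. First observe that, since $\bE\in\Hrotz$, the extension of the jump operator to $\bVsh$ gives $\jump{\be}\upc_F=-\jump{\bE_h}\upc_F$ on every $F\in\Fall$, whence $\ltwo{\jumpK{\bE_h}\upc}{\dK}=\ltwo{\jumpK{\be}\upc}{\dK}$. Consequently the tangential-jump contribution $\nu_{\dK}^\sharp(p^2/h_K)\ltwo{\jumpK{\bE_h}\upc}{\dK}^2$ appearing in both $\etacK^2$ and $\etajK^2$ is, by definition, a summand of $\tnorm{\be}_{\sharp,\Kupf}^2$. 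The remaining nonconformity term $\omega^2\epsilon_{\dK}^\sharp h_K\ltwo{\jumpK{\bE_h}\upc}{\dK}^2$ in $\etajK^2$ is rewritten as $(\omega^2\epsilon_K^\sharp/\nu_K^\sharp)(h_K/p)^2$ times that same summand; taking square roots then produces exactly the weight $(\omega^2\epsilon_K^\sharp/\nu_K^\sharp)^{\frac12}(h_K/p)$ of the claimed bound. This accounts for all the jump terms and for the prefactor $1+(\omega^2\epsilon_K^\sharp/\nu_K^\sharp)^{\frac12}(h_K/p)$.

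It remains to bound the four residual contributions: the two element residuals $\DIV(\bJ-\omega^2\eps\bE_h)$ and $\bJ-\omega^2\eps\bE_h-\ROT(\bnu\bROT\bE_h)$, and the two face residuals $\jump{\eps\bE_h}\upd$ (normal) and $\jump{\bnu\bROT\bE_h}\upc$ (tangential). For the element residuals I would first freeze the data, replacing $\bJ$ by its best polynomial approximation $\bJ_h$, the discrepancy being absorbed into $\operatorname{osc}_{\Kupf}$, then test the resulting polynomial residual against itself weighted by an element bubble $b_K$, and integrate by parts. Using the strong forms $\ROT(\bnu\ROTZ\bE)=\bJ-\omega^2\eps\bE$ and $\omega^2\DIV(\eps\bE)=\DIV\bJ$ (the latter from $\DIV\ROT=0$), and since $b_K$ is supported in $K$ and kills all boundary terms, the element residuals are expressed solely through $\omega^2(\eps\be,\SCAL)_K$ and $(\bnu\ROT\be,\ROT\SCAL)_K$. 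The $hp$-inverse and norm-equivalence estimates for $b_K$ then return $\ltwo{\bROT\be}{K}$ and $\omega\|\be\|_{\eps,K}$, at the price of the powers of $p$ that aggregate to $p^{\frac32}$, and with the correct weights $h_K/p$, $1/\nu_K^\sharp$, and $1/(\omega^2\epsilon_K^\sharp)$.

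For the face residuals I would use a face bubble $b_F$ lifted to the two cells sharing $F$ and vanishing on the remaining faces, and test the lifted jump against it. Integrating by parts cellwise and exploiting that $\bnu\ROTZ\bE\in\Hrot$ (so its tangential jumps vanish, because $\ROT(\bnu\ROTZ\bE)=\bJ-\omega^2\eps\bE\in\bL^2$) and that $\eps\bE\in\Hdiv$ (so its normal jumps vanish, because $\DIV(\eps\bE)=\omega^{-2}\DIV\bJ\in L^2$), each face residual is controlled by the already-bounded element residuals plus local contributions of $\be$. Summing the contribution-wise estimates over $\Kupf$ and invoking the mesh shape-regularity yields the claim.

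The main obstacle I anticipate is the sharp $hp$-tracking in the face estimates: one must combine the face-bubble inverse inequalities in three dimensions together with a stable tangential (respectively normal) lifting, while simultaneously respecting the curl-curl and divergence structure and keeping both the dependence on $p$ at $p^{\frac32}$ and the dependence on the material weights as stated. This is precisely where the standard $hp$-efficiency machinery of bubble functions and inverse inequalities must be deployed with care, and where the data oscillation $\operatorname{osc}_{\Kupf}$ enters to compensate for the non-polynomial part of $\bJ$; the remaining arguments then follow the classical residual-based efficiency analysis, up to the adaptations dictated by the curl-curl operator.
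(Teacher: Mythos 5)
There is a genuine gap: your treatment omits the term $\nu_{\dK}^\sharp\big(\tfrac{h_K}{p}\big)\ltwo{\jumpK{\bROT\bE_{h}}\upd}{\dK}^2$ in the nonconformity indicator $\etajK$, i.e., the \emph{normal} jump of $\ROTh\bE_h$. Your first paragraph claims to account for ``all the jump terms,'' but it only handles the tangential jumps $\jump{\bE_h}\upc_F$ (which you correctly identify as summands of the error measure, including the weight computation producing the prefactor $(\omega^2\epsilon_K^\sharp/\nu_K^\sharp)^{1/2}(h_K/p)$), and your list of residual contributions contains only $\jump{\eps\bE_h}\upd$ and $\jump{\bnu\bROT\bE_h}\upc$, neither of which is $\jump{\bROT\bE_h}\upd$. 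This missing term cannot be handled by a bubble-function argument against the exact solution, since $\bn_F\SCAL\ROTZ\bE$ has no pointwise meaning for $\bE\in\Hrotz$. The correct (and, in fact, the only genuinely new) step is to observe that on each face the normal component of the curl of a polynomial is a tangential differential operator applied to its tangential trace, so that $\jump{\bROT\bE_h}\upd_F$ is controlled by a surface derivative of $\jump{\bE_h}\upc_F$; an $H^1$-to-$L^2$ inverse inequality on the face then gives
\begin{equation*}
\nu_{\dK}^\sharp\Big(\frac{h_K}{p}\Big) \ltwo{\jumpK{ \bROT \bE_{h}}\upd }{\dK}^2
\lesssim \nu_{\dK}^\sharp \Big(\frac{p^3}{h_K}\Big) \ltwo{\jumpK{\bE_{h}}\upc }{\dK}^2
\lesssim  p\, \tnorm{\bE-\bE_h}_{\sharp,\Kupf}^2,
\end{equation*}
which is consistent with the global factor $p^{\frac32}$ in the statement. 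Without this step the bound on $\etajK$ is incomplete.

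For the remaining contributions (element residuals, normal jump of $\eps\bE_h$, tangential jump of $\bnu\bROT\bE_h$), your bubble-function plan is the standard $hp$-efficiency machinery and is sound in outline; the paper does not reprove these bounds but imports them from the reference on which its estimator is modeled, so your route here is a self-contained (if only sketched) alternative rather than a divergence in substance. Be aware that carrying out the face-bubble liftings with sharp $p$-dependence in 3D is nontrivial and is precisely where the $p^{\frac32}$ loss is incurred; you flag this, but it remains the heaviest unexecuted part of your argument.
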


\begin{proof}
Local bounds on all terms composing the error estimator $\eta_K$ can be found in \cite[Theorem 6.5]{chaumontfrelet:hal-04589791} except for the term involving $\jump{ \bROT \bE_{h}}_F\upd$. Invoking the $H^1$- to $L^2$-norm inverse inequality from \cite[Theorem 4.76]{schwab}, we infer that
\begin{equation*}
\nu_{\dK}^\sharp\Big(\frac{h_K}{p}\Big) \ltwo{\jumpK{ \bROT \bE_{h}}\upd }{\dK}^2
\lesssim \nu_{\dK}^\sharp \Big(\frac{p^3}{h_K}\Big) \ltwo{\jumpK{\bE_{h}}\upc }{\dK}^2
\lesssim  p \tnorm{\bE-\bE_h}_{\sharp,\Kupf}.
\end{equation*}
This completes the proof.
\end{proof}

\begin{remark}[Indefinite case] \label{rem:indefinite}
Whenever $\omega=0$, the above $hp$-a posteriori error analysis can be applied with the additional simplification of discarding the $\bL^2$-norm estimate, so that Assumption~\ref{ass:Neumann} is no longer relevant. (Notice also that the last term on the right-hand side of~\eqref{eq:data_osc} is no longer relevant since $\bJ$ is divergence-free). In the time-harmonic regime with a nonzero, pure imaginary $\omega$, the techniques in \cite{chaumontfrelet:hal-04589791} can be combined with the present $\Hrtz$-conforming reconstruction.
\end{remark}

\bibliographystyle{siam}
\bibliography{biblio}
\end{document}